\numberwithin{equation}{section}
\newtheorem{theorem}{Theorem}
\numberwithin{theorem}{section}
\newtheorem{lemma}{Lemma}
\numberwithin{lemma}{section}
\numberwithin{prop}{section}
\newtheorem{corol}{Corollary}
\numberwithin{corol}{section}
\numberwithin{remark}{section}
\numberwithin{defi}{section}
\numberwithin{exe}{section}
\newcommand{\N}{\mathbb{N}}
\newcommand{\notthis}[1]{}
\title[New Inversion Formula]
{\textbf{A New Linear Inversion Formula for a class of Hypergeometric polynomials}}
\author{R. Nasri(*), A. Simonian(*) and F. Guillemin (**)}
\address{Address:  
(*) Orange Labs, OLN/NMP, Orange Gardens, 
44 avenue de la République, CS 50010, 92326 Chatillon Cedex, France
France (**) Orange Labs Networks Lannion, 2 avenue Pierre Marzin, 22307 Lannion Cedex, Lannion, France}
\email{[ridha.nasri, alain.simonian, fabrice.guillemin]@orange.com}
\begin{document}
\date{Version of \today}

\begin{abstract}

Given complex parameters $x$, $\nu$, $\alpha$, $\beta$ and 
$\gamma \notin -\mathbb{N}$, consider the infinite lower triangular matrix 
$\mathbf{A}(x,\nu;\alpha, \beta,\gamma)$ with elements 
$$
A_{n,k}(x,\nu;\alpha,\beta,\gamma) = 
\displaystyle (-1)^k\binom{n+\alpha}{k+\alpha} \cdot    
F(k-n,-(\beta+n)\nu;-(\gamma+n);x)
$$
for $1 \leqslant k \leqslant n$, depending on the Hypergeometric polynomials $F(-n,\cdot;\cdot;x)$, $n \in \mathbb{N}^*$. After stating a general criterion for the inversion of infinite matrices in terms of associated generating functions, we prove that the inverse matrix 
$\mathbf{B}(x,\nu;\alpha, \beta,\gamma) = 
\mathbf{A}(x,\nu;\alpha, \beta,\gamma)^{-1}$ is given by
\begin{align}
B_{n,k}(x,\nu;\alpha, \beta,\gamma) = & \; 
\displaystyle (-1)^k\binom{n+\alpha}{k+\alpha} \; \cdot 
\nonumber \\
& \; \biggl [ \; \frac{\gamma+k}{\beta+k} \, 
F(k-n,(\beta+k)\nu;\gamma+k;x) \; + 
\nonumber \\
& \;  \; \;  
\frac{\beta-\gamma}{\beta+k} \, F(k-n,(\beta+k)\nu;1+\gamma+k;x) \; \biggr ]
\nonumber
\end{align}
for $1 \leqslant k \leqslant n$, thus providing a new class of linear inversion formulas. Functional relations for the generating functions of related sequences $S$ and $T$, that is, 
$T = \mathbf{A}(x,\nu;\alpha, \beta,\gamma) \, S \Longleftrightarrow 
S = \mathbf{B}(x,\nu;\alpha, \beta,\gamma) \, T$, are also provided.

\end{abstract}

\maketitle 


\section{Introduction}


We address a new class of linear inversion formulas with coefficients involving Hypergeometric polynomials. After an overview of the 
state-of-the-art in the associated fields, we then summarize our main contributions.

\subsection{Motivation}
\label{IM}
Consider the following inversion problem:

\textit{\textbf{let $x \in \; ]0,1[$, $\nu < 0$. Solve the infinite lower-triangular linear system}}
\begin{equation}
\forall \, b \in \mathbb{N}^*, \qquad 
\sum_{\ell = 1}^b (-1)^\ell \binom{b}{\ell} \, 
Q_{b,\ell} \, E_{\ell} = K_b,  
\label{T0}
\end{equation}
\textit{\textbf{with unknown $E_\ell$, $\ell \in \mathbb{N}^*$, and where matrix $\mathbf{Q} = (Q_{b,\ell})_{b, \ell \in \mathbb{N}^*}$ is given by}}
\begin{equation}
Q_{b,\ell} = - \frac{\Gamma(b)\Gamma(1-b\nu)}{\Gamma(b-b\nu)} \, 
\frac{x^{1-b}}{1-x} \; F(\ell - b,-b \nu;-b;x), 
\qquad 1 \leqslant \ell \leqslant b.
\label{Q0}
\end{equation}
This inversion problem is motivated by the resolution of an integral equation arising from Queuing Theory [Prop. 5.2]\cite{GQSN18}. In 
(\ref{Q0}), $\Gamma$ is the Euler Gamma function and 
\begin{equation}
F(\alpha,\beta;\gamma;x) = 
\sum_{m \geqslant 0} 
\frac{(\alpha)_m \, (\beta)_m}{(\gamma)_m} \, \frac{x^m}{m!}
\label{DefFpoly}
\end{equation}
denotes the Gauss Hypergeometric series with complex parameters $\alpha$, 
$\beta$, $\gamma \notin -\mathbb{N}$ ($(c)_m$, $m \in \mathbb{N}$, denotes the Pochhammer symbol for any $c \in \mathbb{C}$ with $(c)_0 = 1$ 
\cite[§5.2(iii)]{NIST10}). Recall that function $F(\alpha,\beta;\gamma;\cdot)$ reduces to a polynomial with degree $-\alpha$ (resp. $-\beta$) if $\alpha$ (resp. $\beta$) is a non positive integer; expression (\ref{Q0}) for coefficient $Q_{b,\ell}$ thus involves a Hypergeometric polynomial with degree $b - \ell$ in both $x$ and $\nu$.

The diagonal coefficients $Q_{b,b}$, $b \geqslant 1$, are non-zero so that lower-triangular system (\ref{T0}) has a unique solution. To make this solution explicit in terms of parameters, write system (\ref{T0}) equivalently as 
\begin{equation}
\forall \, b \in \mathbb{N}^*, \qquad 
\sum_{\ell = 1}^b A_{b,\ell}(x,\nu) \, E_\ell = \widetilde{K}_b,
\label{T0BIS}
\end{equation}
with the reduced right-hand side $(\widetilde{K}_b)$ defined by 
$$
\widetilde{K}_b = - \, 
\frac{\Gamma(b-b\nu)}{\Gamma(b)\Gamma(1-b\nu)} (1-x)x^{b-1}\cdot K_b, 
\qquad b \geqslant 1,
$$ 
and with matrix $\mathbf{A}(x,\nu) = (A_{b,\ell}(x,\nu))$ given by
\begin{equation}
A_{b,\ell}(x,\nu) = (-1)^\ell \binom{b}{\ell} F(\ell-b,-b\nu;-b;x), 
\qquad 1 \leqslant \ell \leqslant b.
\label{T0TER}
\end{equation}
As shown in this paper, the linear relation (\ref{T0BIS}) to which initial system (\ref{T0}) has been recast can be explicitly inverted for any right-hand side $(K_b)_{b \in \mathbb{N}^*}$; this consequently fully solves system (\ref{T0}). 

As developed below, our inversion procedure will actually address a larger family $\mathbf{A}(x,\nu;\alpha,\beta,\gamma)$ of infinite matrices depending on three other arbitrary parameters 
$\alpha$, $\beta$, $\gamma$ and including our initial matrix  
(\ref{T0TER}) as a special case. As 
$\mathbf{A}(x,\nu;\alpha,\beta,\gamma)$, the inverse matrix 
$\mathbf{B}(x,\nu;\alpha,\beta,\gamma) = \mathbf{A}(x,\nu;\alpha,\beta,\gamma)^{-1}$ will prove to involve also a specific class of Gauss Hypergeometric polynomials.

\subsection{State-of-the-art}
We first review known classes of linear inversion formulas for the resolution of infinite linear systems. Most of these inversion formulas have been motivated by problems from pure Combinatorics together with the determination of remarkable relations on special functions:

\textbf{a)} given complex sequences $(a_j)_{j \in \mathbb{Z}}$, 
$(b_j)_{j \in \mathbb{Z}}$ and $(c_j)_{j \in \mathbb{Z}}$ with 
$c_j \neq c_k$ for $j \neq k$, it has been shown \cite{Kratten96} that the lower triangular matrices $\mathbf{A}$ and $\mathbf{B}$ with coefficients
\begin{equation}
A_{n,k} = 
\frac{\displaystyle \prod_{j=k}^{n-1}(a_j + b_j c_k)}
{\displaystyle \prod_{j=k+1}^{n}(c_j - c_k)} , 
\qquad
B_{n,k} = \frac{a_k + b_kc_k}{a_n + b_nc_n} \cdot 
\frac{\displaystyle \prod_{j=k+1}^{n}(a_j + b_j c_n)}
{\displaystyle \prod_{j=k}^{n-1}(c_j - c_n)}
\label{Kratt0}
\end{equation}
for $k \leqslant n$, are inverses. A generalization of (\ref{Kratt0}) to the multi-dimensional case when $\mathbf{A} = (A_{\mathbf{n},\mathbf{k}})$ with multi-indexes $\mathbf{n}$, $\mathbf{k} \in \mathbb{Z}^r$, 
$r \in \mathbb{N}$, is also provided in \cite{Schlo97}. As an application, the obtained relations provide  summation formulas for multidimensional basic Hypergeometric series.

The matrix $\mathbf{A} = \mathbf{A}(x,\nu)$ introduced in 
(\ref{T0BIS})-(\ref{T0TER}), however, cannot be cast into the product form (\ref{Kratt0}): in fact, such a product form for the coefficients of 
$\mathbf{A}(x,\nu)$ should involve the $n-k$ zeros 
$c_{j,n,k}$, $k \leqslant j \leqslant n-1$, of the specific Hypergeometric polynomial $F(k-n,-n\nu;-n;x)$, $k \leqslant n$, in variable $x$; but such zeros depend on all indexes $j$, $n$ and $k$, which precludes the use of a factorization such as (\ref{Kratt0}) where sequences with one index only intervene;

\textbf{b)} given a sequence $(\beta_n)_{n \in \mathbb{N}}$, the inverse 
$\mathbf{M}$ of triangular matrix $\mathbf{L}$ with coefficients 
$L_{n,k} = \binom{n}{k} \, \beta_{n-k}$, $k \leqslant n$, has been shown \cite[Theorem 9]{Costa12} to be given by 
$$
M_{n,k} = \binom{n}{k} \, \mathscr{A}_k(0), 
\qquad k \leqslant n,
$$
where $\mathscr{A}_k$, $k \in \mathbb{N}$, is the family of Appell polynomials associated with $(\beta_n)_{n \in \mathbb{N}}$; each coefficient $\mathscr{A}_k(0)$ can be generally written in terms of a determinant of order $k$. This framework does not apply, however, to matrix (\ref{T0TER}) since the ratio $A_{n,k}/\binom{n}{k}$ does not depend there on the difference $n - k$ only;

\textbf{c)} given constants $\alpha$, $\beta$, $x$ and the family of Jacobi polynomials
$$
P_n^{(\alpha,\beta)}(x) = \frac{(\alpha+1)_n}{n!} \, 
F \left (-n,n+\alpha+\beta+1;\alpha+1; \frac{1-x}{2} \right ), 
\quad n \in \mathbb{N},
$$ 
it is established in \cite[Theorem 4.1]{CagKoo15} that the lower triangular matrices $\mathbf{L}$ and $\mathbf{M}$ where 
$L_{n,k}^{(\alpha,\beta)} = P_{n-k}^{(\alpha+k,\beta+k)}(x)$ and 
$$
M_{n,k}^{(\alpha,\beta)} = \displaystyle \frac{n+\beta}{k+\alpha} \, 
P_{n-k}^{(-\alpha-n,-\beta-n)}(x) + 
\frac{\alpha - \beta}{k+\alpha} \, P_{n-k}^{(-\alpha-n,-\beta-n-1)}(x)
$$
for all $k \leqslant n$, are inverses. A $q$-analogue of this result in terms of $q$-ultraspherical polynomials is considered in \cite{Alden15}.\\
Closed analytical formulae for generalized linearization coefficients for Jacobi polynomials and other special polynomials have also been addressed in \cite{Cha10, Fou13}. Writing the latter coefficient $L_{n,k}^{(\alpha,\beta)}$ as
$$
L_{n,k}^{(\alpha,\beta)} = \frac{(\alpha+k+1)_{n-k}}{(n-k)!} \, 
F \left ( k-n,n+k+\alpha+\beta+1;k+\alpha+1;\frac{1-x}{2} \right ),
$$
the second argument in $F$ depends on $n+k$ only. This does not fit, however, the considered case (\ref{T0TER}) where the second argument of 
$F$ is $-n\nu$, with $\nu \neq 1$ in general.

\vspace{0.1in}
In this paper, we will actually consider the inversion of the much larger family of lower triangular matrices 
$\mathbf{A}(x,\nu;\alpha,\beta,\gamma)$ with coefficients 
\begin{equation}
A_{n,k}(x,\nu;\alpha,\beta,\gamma) = 
\displaystyle (-1)^k\binom{n+\alpha}{k+\alpha} \cdot   
F(k-n,-(\beta+n)\nu;-(\gamma+n);x)
\label{T0TER-BIS}
\end{equation}
for $1 \leqslant k \leqslant n$, and depending on three other complex parameters $\alpha$, $\beta$, $\gamma \notin -\mathbb{N}$; our introducing case (\ref{T0TER}) thus corresponds to the specific values 
$\alpha = \beta = \gamma = 0$. Using functional operations on exponential generating series related to the coefficients of matrix 
$\mathbf{A}(x,\nu;\alpha,\beta,\gamma)$, we will show how it can be inverted through a fully explicit procedure. As developed below, the remarkable structure of the inversion 
$\mathbf{B}(x,\nu;\alpha,\beta,\gamma) = 
\mathbf{A}(x,\nu;\alpha,\beta,\gamma)^{-1}$ brings a new contribution to the field of linear inversion formulas, namely infinite matrices with coefficients involving a class of Hypergeometric polynomials depending on five parameters.

\subsection{Paper contribution}
Our main contributions can be summarized as follows:

$\bullet$ in Section \ref{LTS}, we first establish an inversion criterion for a general class of infinite lower-triangular matrices, enabling us to state the inversion formula for the class of lower triangular matrices (\ref{T0TER-BIS});

$\bullet$ in Section \ref{GF}, functional relations are obtained for the ordinary (resp. exponential) generating functions of sequences 
$(S_n)_{n \in \mathbb{N}^*}$ and $(T_n)_{n \in \mathbb{N}^*}$ related by the inversion formula.


\section{Lower-Triangular Systems}
\label{LTS}


Let $(a_m)_{m\in \N}$ and $(b_m)_{m\in \N}$ be complex sequences such that 
$a_0 = b_0 =1$ and denote by $f(x)$ and $g(x)$ their respective exponential generating series, that is,
\begin{equation}
f(x) = \sum_{m=0}^{+\infty}\frac{a_m}{m!} \, x^m, 
\qquad 
g(x) = \sum_{m=0}^{+\infty}\frac{b_m}{m!} \, x^m.
\label{DefFG}
\end{equation}
We use the notation $[x^n] f(x)$ for the coefficient of $x^n$, 
$n \in \mathbb{N}$, in series $f(x)$. For all 
$x, \; \alpha \in \mathbb{C}$, define the infinite lower-triangular matrices $\mathbf{A}(x,\alpha) = (A_{n,k}(x,\alpha))_{n,k \in \N^*}$ and 
$\mathbf{B}(x,\alpha) = (B_{n,k}(x,\alpha))_{n,k \in \N^*}$ by
\begin{equation}
\left\{
\begin{array}{ll}
A_{n,k}(x,\alpha) = \displaystyle 
(-1)^k \binom{\alpha+n}{\alpha+k}\sum_{m=0}^{n-k}\frac{(k-n)_m \, a_m}{m!} \, x^m,
\\ \\
B_{n,k}(x,\alpha) = \displaystyle 
(-1)^k \binom{\alpha+n}{\alpha+k}\sum_{m=0}^{n-k}\frac{(k-n)_m \, b_m}{m!} \, x^m.
\end{array} \right.
\label{DefAB}
\end{equation}
From definition (\ref{DefAB}), matrices $\mathbf{A}(x,\alpha)$ and 
$\mathbf{B}(x,\alpha)$ have diagonal elements 
$$
A_{k,k}(x,\alpha) = B_{k,k}(x,\alpha) = (-1)^k, \qquad 
k \in \mathbb{N}^*,
$$ 
and are thus invertible. 

\subsection{An inversion criterion}
The sequences $(a_m)_{m\in \N}$ and $(b_m)_{m\in \N}$ will be said to be \textit{independent} if, for any pair $(n,k)$, 

$\bullet$ they may depend on one index $n$ or $k$, but not on both,

$\bullet$ they do not depend on the same index, that is, if $(a_m)_{m\in \N}$ depends on $n$ (resp. on $k$), then $(b_m)_{m\in \N}$ depends on $k$ (resp. on $n$) and not on $n$ (resp. not on $k$). 

To alleviate notation, the dependance of either $(a_m)_{m\in \N}$ or 
$(b_m)_{m\in \N}$ with respect to indexes $n$ or $k$ is specified below only when necessary. We now state the following inversion criterion.

\begin{theorem}
\textbf{Given independent sequences $(a_m)_{m\in \N}$ and 
$(b_m)_{m\in \N}$, their associated matrix $\mathbf{A}(x,\alpha)$ and 
$\mathbf{B}(x,\alpha)$ defined by (\ref{DefAB}) are inverse of each other if and only if the condition}
	\begin{equation}
	[x^{n-k}]f(-x)g(x) = \delta_{n,k}, 
	\qquad 1 \leqslant k \leqslant n,
	\label{Invers0}
	\end{equation}
\textbf{on generating functions $f$ and $g$ holds, with $\delta_{n,k} = 1$ if $n = k$ and 0 otherwise.}
\label{theoMaininversionR}
\end{theorem}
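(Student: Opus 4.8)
The plan is to compute the product $\mathbf{A}(x,\alpha)\mathbf{B}(x,\alpha)$ entrywise and show its $(n,k)$-entry equals $\delta_{n,k}$ precisely when condition (\ref{Invers0}) holds. Since both matrices are lower-triangular with diagonal entries $(-1)^k$, I would fix $1 \leqslant k \leqslant n$ and write
$$
(\mathbf{A}\mathbf{B})_{n,k} = \sum_{j=k}^{n} A_{n,j}(x,\alpha) \, B_{j,k}(x,\alpha),
$$
substituting the definitions from (\ref{DefAB}). The key algebraic simplification comes from the binomial coefficients: the product $\binom{\alpha+n}{\alpha+j}\binom{\alpha+j}{\alpha+k}$ telescopes via the standard identity $\binom{\alpha+n}{\alpha+j}\binom{\alpha+j}{\alpha+k} = \binom{\alpha+n}{\alpha+k}\binom{n-k}{j-k}$, which pulls out the overall factor $(-1)^{j}\cdot(-1)^{k}\binom{\alpha+n}{\alpha+k}$ and leaves a clean inner sum over $j$.

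Next I would reindex. After factoring out $(-1)^k\binom{\alpha+n}{\alpha+k}$, the remaining double sum involves $\sum_{m} \frac{(j-n)_m a_m}{m!}x^m$ and $\sum_{p} \frac{(k-j)_p b_p}{p!}x^p$. The goal is to recognize this as a coefficient extraction. Here I would exploit the combinatorial meaning of the exponential generating functions: the factor $(k-j)_p = (-1)^p(j-k)_p$ and the telescoping binomial let me collapse the sum over the intermediate index $j$. Concretely, I expect the identity $\sum_{j} (-1)^{j-k}\binom{n-k}{j-k}(\,\cdots\,)$ to act as a finite-difference operator that converts the product of the two generating series into an evaluation of $[x^{n-k}]f(-x)g(x)$. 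The appearance of $f(-x)$ rather than $f(x)$ is forced by the $(-1)^j$ sign from the first matrix combining with the reindexing of the Pochhammer symbols.

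The main obstacle will be handling the \emph{independence} hypothesis carefully while manipulating the sums. Because $(a_m)$ and $(b_m)$ may each depend on one of the outer indices $n$ or $k$ (but not both, and not the same one), I must verify that throughout the telescoping the index on which each sequence depends is held fixed and is \emph{not} the summation index $j$. This is exactly what independence guarantees: the sequence appearing in $A_{n,j}$ depends only on $n$ (or $k$) and the one in $B_{j,k}$ depends only on the other outer index, so neither sequence's defining parameter is the variable $j$ being summed. I would make this explicit so the coefficient extraction $[x^{n-k}]f(-x)g(x)$ is unambiguous. Finally, for the converse direction, since the diagonal and the recursion determine the product entry as an affine function of $[x^{n-k}]f(-x)g(x)$, the equality $(\mathbf{A}\mathbf{B})_{n,k}=\delta_{n,k}$ for all $n\geqslant k$ is equivalent to (\ref{Invers0}); I would note that the same computation with roles of $\mathbf{A}$ and $\mathbf{B}$ exchanged (using $g(-x)f(x)$) yields $\mathbf{B}\mathbf{A}=\mathbf{I}$ under the identical condition, since $[x^{n-k}]f(-x)g(x)=[x^{n-k}]g(-x)f(x)$ up to the sign symmetry, completing the equivalence.
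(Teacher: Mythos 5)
Your outline follows the paper's own proof almost step for step: compute $(\mathbf{A}\mathbf{B})_{n,k}$ entrywise, telescope the binomial coefficients (your identity $\binom{\alpha+n}{\alpha+j}\binom{\alpha+j}{\alpha+k}=\binom{\alpha+n}{\alpha+k}\binom{n-k}{j-k}$ is the paper's Gamma-function identity in binomial form), invoke independence to pull $a_m$ and $b_{m'}$ through the sum over the intermediate index $j$, and recognize what survives as $[x^{n-k}]f(-x)g(x)$. The genuine gap sits at the decisive step: you only say you ``expect'' the alternating sum $\sum_j(-1)^{j-k}\binom{n-k}{j-k}(\cdots)$ to act as a finite-difference operator producing the coefficient extraction. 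That expectation is the entire mathematical content of the theorem, and it is never proved. Concretely, after interchanging summations one must show that the inner sum $\sum_{k\leqslant j\leqslant n}\frac{(-1)^{j}}{(n-j-m)!\,(j-k-m')!}$ vanishes whenever $m+m'<n-k$ and equals $(-1)^{n+m}$ when $m+m'=n-k$; only this annihilation property collapses the double convolution onto the single anti-diagonal $m+m'=n-k$ that constitutes $[x^{n-k}]f(-x)g(x)$. The paper devotes Lemma \ref{lemm1} to exactly this evaluation, with a full proof in Appendix \ref{A1} using Gauss's summation theorem, the reflection formula and analytic continuation. Your sketch can in fact be completed more cheaply than that: with $N=n-k$, $i=j-k$, the relevant inner sum is $\sum_i(-1)^i\binom{N}{i}\binom{N-i}{m}\binom{i}{p}$, and the rewriting $\binom{N}{i}\binom{N-i}{m}\binom{i}{p}=\binom{N}{m}\binom{N-m}{p}\binom{N-m-p}{i-p}$ reduces it to $(-1)^p\binom{N}{m}(1-1)^{N-m-p}=(-1)^p\binom{N}{m}\,\delta_{m+p,N}$, which is precisely the needed annihilation. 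But some such computation has to be written down for your text to be a proof rather than a plan.

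A second, smaller flaw: your closing claim that ``the same computation with roles of $\mathbf{A}$ and $\mathbf{B}$ exchanged'' yields $\mathbf{B}\mathbf{A}=\mathbf{Id}$ is not available, because the independence hypothesis is not symmetric in the two factors. In the case the paper treats (and which covers the application in Theorem \ref{PropIn}), $a_m$ depends on the row index and $b_m$ on the column index; then in $(\mathbf{B}\mathbf{A})_{n,k}=\sum_j B_{n,j}A_{j,k}$ \emph{both} sequences depend on the summation index $j$, and the interchange of summations on which the whole computation rests is no longer justified. The correct fix, which the paper implicitly relies on, is pure linear algebra: for lower-triangular matrices with nonzero diagonal entries, $\mathbf{A}\mathbf{B}=\mathbf{Id}$ already implies $\mathbf{B}\mathbf{A}=\mathbf{Id}$ (truncate to leading $N\times N$ blocks, where a right inverse of a square matrix is a two-sided inverse). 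With that substitution, and with the inner-sum evaluation actually carried out, your argument becomes a correct proof along the same lines as the paper's.
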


\noindent
For conciseness of notation again, the dependence of generating functions 
$f$ and $g$ on parameter $\alpha$ is omitted. The proof of Theorem 
\ref{theoMaininversionR} requires the following technical lemma whose proof is deferred to Appendix \ref{A1}.

\begin{lemma}
\textbf{Let $N \in \N^*$ and complex numbers $\lambda$, $\mu$. Defining}
$$
D_{N}(\lambda,\mu) = 
\sum_{r=0}^{N-1}\frac{(-1)^r}{\Gamma(1+r-\lambda)\Gamma(1-r+\mu)},
$$
\textbf{we then have}
\begin{equation}
D_{N}(\lambda,\mu) = 
\left \{
\begin{array}{ll}
\displaystyle 
\frac{1}{\mu - \lambda} \left[ \frac{1}{\Gamma(-\lambda)\Gamma(1+\mu)} - 
\frac{(-1)^N}{\Gamma(N-\lambda)\Gamma(1-N+\mu)} \right], \, \mu \neq \lambda
\\ \\
\displaystyle \frac{\sin(\pi\lambda)}{\pi} \left[ \psi(-\lambda)-\psi(N-\lambda) \right], ~ \qquad \qquad \qquad \qquad \quad \, \mu = \lambda,
\end{array} \right.
\label{Sn}
\end{equation}
\textbf{where $\psi = \Gamma'/\Gamma$.}
\label{lemm1}
\end{lemma}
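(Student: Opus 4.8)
The plan is to prove the off-diagonal identity ($\mu \neq \lambda$) by exhibiting the summand as an exact difference, so that the sum telescopes, and then to recover the diagonal case $\mu = \lambda$ by a limiting argument.

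First I would introduce the auxiliary sequence
$$
b_r = \frac{(-1)^r}{\Gamma(r-\lambda)\,\Gamma(1-r+\mu)}, \qquad r \in \N,
$$
and claim that each summand of $D_N(\lambda,\mu)$ is the rescaled forward difference
$$
\frac{(-1)^r}{\Gamma(1+r-\lambda)\,\Gamma(1-r+\mu)} = \frac{1}{\mu-\lambda}\,(b_r - b_{r+1}).
$$
To verify this I would apply the functional equation $\Gamma(z+1)=z\,\Gamma(z)$ in the forms $\Gamma(1+r-\lambda)=(r-\lambda)\Gamma(r-\lambda)$ and $\Gamma(1-r+\mu)=(\mu-r)\Gamma(\mu-r)$, factor $(-1)^r/[\Gamma(r-\lambda)\Gamma(\mu-r)]$ out of $b_r-b_{r+1}$, and collapse the resulting bracket $\tfrac{1}{\mu-r}+\tfrac{1}{r-\lambda}$ into $\tfrac{\mu-\lambda}{(\mu-r)(r-\lambda)}$ by partial fractions. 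Summing over $0\leqslant r\leqslant N-1$ then telescopes to $\tfrac{1}{\mu-\lambda}(b_0-b_N)$, which is exactly the first line of (\ref{Sn}).

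For the diagonal case $\mu=\lambda$ I would argue by continuity: since $1/\Gamma$ is entire, $D_N(\lambda,\mu)$ is an entire function of $\mu$ for fixed $\lambda$, so $D_N(\lambda,\lambda)=\lim_{\mu\to\lambda}D_N(\lambda,\mu)$. Writing the bracket in the first line of (\ref{Sn}) as $f(\mu)=\tfrac{1}{\Gamma(-\lambda)\Gamma(1+\mu)}-\tfrac{(-1)^N}{\Gamma(N-\lambda)\Gamma(1-N+\mu)}$, the reflection formula $\Gamma(z)\Gamma(1-z)=\pi/\sin(\pi z)$, applied at $z=-\lambda$ and at $z=N-\lambda$ with $\sin(\pi N-\pi\lambda)=-(-1)^N\sin(\pi\lambda)$, shows that $f(\lambda)=0$. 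Hence $f(\mu)/(\mu-\lambda)$ has a removable singularity at $\mu=\lambda$, whose value is $f'(\lambda)$ by L'Hôpital's rule.

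Finally I would compute $f'(\mu)$ using $\tfrac{d}{d\mu}\,\Gamma(1+\mu)^{-1}=-\psi(1+\mu)/\Gamma(1+\mu)$, evaluate at $\mu=\lambda$, and substitute the two reflection-formula values of $\Gamma(-\lambda)\Gamma(1+\lambda)$ and $\Gamma(N-\lambda)\Gamma(1-N+\lambda)$ already obtained. This yields $\tfrac{\sin(\pi\lambda)}{\pi}\,[\psi(1+\lambda)-\psi(1-N+\lambda)]$. The last, and only genuinely delicate, step is to reconcile this with the stated form $\tfrac{\sin(\pi\lambda)}{\pi}\,[\psi(-\lambda)-\psi(N-\lambda)]$: for this I would invoke the digamma reflection identity $\psi(1-z)=\psi(z)+\pi\cot(\pi z)$ at $z=-\lambda$ and at $z=N-\lambda$, whereupon the two $\pi\cot(\pi\lambda)$ contributions (equal by the $\pi$-periodicity of $\cot$) cancel in the difference, converting $\psi(1+\lambda)-\psi(1-N+\lambda)$ exactly into $\psi(-\lambda)-\psi(N-\lambda)$. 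I expect the sign bookkeeping in these reflection identities to be the main place where care is required.
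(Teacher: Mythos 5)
Your proof is correct, and it takes a genuinely different route from the paper's. You obtain the off-diagonal case by an exact telescoping identity: writing $b_r = (-1)^r/[\Gamma(r-\lambda)\Gamma(1-r+\mu)]$, the recurrence $1/\Gamma(z) = z/\Gamma(z+1)$ gives $b_r - b_{r+1} = (\mu-\lambda)\,(-1)^r/[\Gamma(1+r-\lambda)\Gamma(1-r+\mu)]$ identically, so the finite sum collapses to $(b_0-b_N)/(\mu-\lambda)$ with no convergence questions and no restriction on $(\lambda,\mu)$. The paper instead completes the finite sum to a difference of two infinite series (convergent only for $\mathrm{Re}(\mu)>\mathrm{Re}(\lambda)$), recognizes each as a ${}_2F_1$ evaluated at $1$, applies Gauss's summation theorem, and then must invoke analytic continuation in a separate step to cover all complex $(\lambda,\mu)$; your argument makes both the Gauss theorem and the continuation step unnecessary. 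For the diagonal case the paper computes $D_N(\lambda,\lambda)$ directly, reducing it via the reflection formula to $-\frac{\sin(\pi\lambda)}{\pi}\sum_{r=0}^{N-1}\frac{1}{r-\lambda}$ and using the digamma addition formula, whereas you recover it as the limit $f'(\lambda)$ of the off-diagonal expression by L'H\^opital, then convert $\psi(1+\lambda)-\psi(1-N+\lambda)$ into $\psi(-\lambda)-\psi(N-\lambda)$ via the digamma reflection formula; your sign bookkeeping there ($\sin(\pi N-\pi\lambda)=-(-1)^N\sin(\pi\lambda)$, and the two $\pi\cot(\pi\lambda)$ terms cancelling by periodicity) checks out. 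Two small points of care: first, in verifying the telescoping step you should phrase the use of the functional equation as $1/\Gamma(z)=z/\Gamma(z+1)$ (an identity of entire functions) rather than dividing by $(r-\lambda)$ and $(\mu-r)$, which vanish when $\lambda$ or $\mu$ is an integer in range; second, when $\lambda$ is a non-negative integer the diagonal formula is an indeterminate form $0\times\infty$ and must be read as a limit, but this caveat is intrinsic to the statement and affects the paper's own proof equally (indeed the paper takes exactly such limits when applying the lemma).
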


\noindent
We now proceed with the justification of Theorem \ref{theoMaininversionR}.

\begin{proof}
Let two independent sequences $(a_m)_{m\in \N}$ and 
$(b_m)_{m\in \N}$. Without loss of generality, we assume that, for every 
$m\in \N$, $a_m$ depends on $n$ and not on $k$, while $b_m$ depends on $k$ and not on $n$.

Both matrices $\mathbf{A}(x,\alpha)$ and $\mathbf{B}(x,\alpha)$ being 
lower-triangular, so is their product $\mathbf{C}(x,\alpha) = 
\mathbf{A}(x,\alpha)\mathbf{B}(x,\alpha)$. After definition (\ref{DefAB}), the coefficient
$$
C_{n,k}(x,\alpha) = \sum_{\ell \geqslant 1} 
A_{n,\ell}(x,\alpha)B_{\ell,k}(x,\alpha), \qquad 
1 \leqslant k \leqslant n
$$
(where the latter sum over index $\ell$ is actually finite), of matrix 
$\mathbf{C}(x,\alpha)$ reads 
\begin{align}
C_{n,k}(x,\alpha) = & \, 
\sum_{\ell = 1}^{+\infty} (-1)^\ell \, \binom{\alpha+n}{\alpha+\ell} 
\sum_{m=0}^{n-\ell} \frac{(-1)^m(n-\ell)! \, a_m}{(n-\ell-m)!m!} \, x^m \; \times  
\nonumber \\
& \, (-1)^k \, \binom{\alpha+\ell}{\alpha+k}\sum_{m'=0}^{\ell-k} 
\frac{(-1)^{m'}(\ell-k)! \, b_{m'}}{(\ell-k-m')!m'!} \, x^{m'}
\nonumber
\end{align}
after writing $(-r)_m = (-1)^m r!/(r-m)!$ for any positive integer $r$.
Using the identity 
$$
\binom{\alpha+n}{\alpha+\ell}\binom{\alpha+\ell}{\alpha+k}
(n-\ell)! (\ell-k)!=\frac{\Gamma(\alpha+n+1)}{\Gamma(\alpha+k+1)},
$$
$C_{n,k}(x,\alpha)$ simplifies to
\begin{align}
C_{n,k}(x,\alpha) = & \, 
(-1)^k \, \frac{\Gamma(\alpha+n+1)}{\Gamma(\alpha+k+1)} \; \times 
\nonumber \\
& \, \sum_{\ell = 1}^{+\infty} (-1)^\ell 
\sum_{m = 0}^{n-\ell} \frac{(-1)^m a_m \, x^m}{m!(n-\ell-m)!} 
\sum_{m' = 0}^{\ell - k} \frac{(-1)^{m'} b_{m'} \, x^{m'}}{m'!(\ell-k-m')!}.
\label{P11}
\end{align}
Since $a_m$ depends only on $n$ and $b_m$ depends only on $k$, $a_m$ and 
$b_m$ are both independent of index $\ell$ in the sum (\ref{P11}). We can then exchange the summation order to sum first on $\ell$, giving
\begin{align}
C_{n,k}(x,\alpha) = 
(-1)^k \, \frac{\Gamma(\alpha+n+1)}{\Gamma(\alpha+k+1)} & \, 
\sum_{(m,m') \in \Delta_{n,k}} \frac{(-1)^m a_m \, x^m}{m!}
\frac{(-1)^{m'} b_{m'} \, x^{m'}}{m'!} \; \times
\nonumber \\
& \; \; \; \; \, \sum_{k \leqslant \ell \leqslant n}
\frac{(-1)^\ell}{(n-\ell-m)!(\ell-k-m')!}
\label{P12}
\end{align}
with the subset 
$\Delta_{n,k} = \{(m,m') \in \mathbb{N}^2, \; m+m' \leqslant n-k\}$ for given $k \leqslant n$, and where the latter sum on index $\ell$ can be equivalently written as
\begin{align}
\sum_{k \leqslant \ell \leqslant n}\frac{(-1)^\ell}{(n-\ell-m)!(\ell-k-m')!} 
= & \, \sum_{r = 0}^{n-k} \frac{(-1)^{n-r}}{(r-m)!(n-r-k-m')!} 
\nonumber \\
= & \, (-1)^n \, D_{n-k+1}(m,n-k-m')
\nonumber
\end{align}
with the index change $\ell = n-r$ and the notation of Lemma \ref{lemm1}. The expression (\ref{P12}) for coefficient $C_{n,k}(x)$ consequently reduces to
\begin{align}
C_{n,k}(x,\alpha) = 
(-1)^{n+k} \, \frac{\Gamma(\alpha+n+1)}{\Gamma(\alpha+k+1)} \, 
\sum_{(m,m') \in \Delta_{n,k}} & \frac{(-1)^m a_m \, x^m}{m!}
\frac{(-1)^{m'} b_{m'} \, x^{m'}}{m'!} \; \times
\nonumber \\
& D_{n-k+1}(m,n-k-m')
\label{P13}
\end{align}
and we are left to calculate $D_{n-k+1}(m,n-k-m')$ for all non negative 
$m$ and $m'$, $(m,m') \in \Delta_{n,k}$. By Lemma \ref{lemm1} applied to 
$\lambda = m$ and $\mu = n-k-m'$, we successively derive that:

\begin{itemize}
\item[\textbf{(a)}] if $\mu > \lambda \Leftrightarrow m + m' < n-k$, formula 
(\ref{Sn}) entails
\begin{align}
& D_{n-k+1}(m,n-k-m') \; = 
\nonumber \\ 
& \frac{1}{n-k-(m+m')} \left [ \frac{1}{\Gamma(-m)\Gamma(1+n-k-m')} - 
\frac{(-1)^{n-k+1}}{\Gamma(n-k+1-m)\Gamma(-m')} \right ];
\nonumber
\end{align}
as $\Gamma(-m) = \Gamma(-m') = \infty$ for all non negative integers 
$m \geqslant 0$ and $m' \geqslant 0$, each fraction of the latter expression vanishes and thus
\begin{equation}
D_{n-k+1}(m,n-k-m') = 0, \qquad m + m' < n-k;
\label{P14}
\end{equation}

\item[\textbf{(b)}] if $\lambda = \mu \Leftrightarrow m + m' = n-k$,  formula (\ref{Sn}) entails
\begin{equation}
D_{n-k+1}(m,m) = \lim_{\lambda \rightarrow m} \frac{\sin(\pi\lambda)}{\pi} 
\left [ \psi(-\lambda) - \psi(n-k+1-\lambda) \right ].
\label{P15}
\end{equation}
We have $\sin(m \pi) = 0$ while function $\psi$ has a polar singularity at every non positive integer; the limit (\ref{P15}) is thus indeterminate 
($0 \times \infty$) but this is solved via the reflection 
formula $\psi(z) - \psi(1-z) = - \pi \, \cot(\pi \, z)$, 
$z \notin - \mathbb{N}$, for function $\psi$ 
\cite[Chap.5, §5.5.4]{NIST10}. In fact, applying the latter to 
$z = -\lambda$ first gives
$\sin(\pi \lambda) \, \psi(-\lambda) = 
\sin(\pi \lambda) \, \psi(1+\lambda) + \pi \cdot \cos(\pi\lambda)$ 
whence
$$
\lim_{\lambda \rightarrow m} 
\frac{\sin(\pi \lambda)}{\pi} \, \psi(-\lambda) 
= 0 \times \psi(1+m) + (-1)^m = (-1)^m;
$$
besides, the second term $\psi(n-k+1-\lambda)$ in (\ref{P15}) has a finite limit when $\lambda \rightarrow m$ since $m + m' = n - k \Rightarrow m \leqslant n - k$ so that $n-k+1-\lambda$ tends to a positive integer. From 
(\ref{P15}) and the latter discussion, we are left with
\begin{equation}
D_{n-k+1}(m,m) = (-1)^m, \qquad m + m' = n-k.
\label{P16}
\end{equation}
\end{itemize}

In view of items \textbf{(a)} and \textbf{(b)}, identities 
(\ref{P15}) and (\ref{P16}) together reduce expression (\ref{P13}) to
\begin{align}
C_{n,k}(x,\alpha) = & \; \frac{\Gamma(\alpha+n+1)}{\Gamma(\alpha+k+1)} \, 
\sum_{m = 0}^{n-k} \frac{(-1)^m a_m \, x^m}{m!} \, 
\frac{b_{n-k-m}}{(n-k-m)!}x^{n-k} 
\nonumber \\
= & \; \frac{\Gamma(\alpha+n+1)}{\Gamma(\alpha+k+1)} \; [x]^{n-k}f(-x)g(x)
\nonumber
\end{align}
where $f$ and $g$ denote the exponential generating function of the sequence $(a_m)_{m \in \mathbb{N}}$ and the sequence 
$(b_m)_{m \in \mathbb{N}}$, respectively. It follows that 
$\mathbf{C}(x,\alpha) = \mathbf{A}(x,\alpha) \mathbf{B}(x,\alpha)$ is the identity matrix \textbf{Id} if and only if condition (\ref{Invers0}) holds, as claimed. 
\end{proof}

\subsection{The inversion formula}
We now formulate the inversion formula for a whole family of lower-triangular matrices involving Hypergeometric polynomials.

\begin{theorem}
\textbf{Let $x, \, \nu, \, \alpha, \, \beta \in \mathbb{C}$ and 
$\gamma \in \mathbb{C} \setminus \, \mathbb{Z^*_{-}}$. Define the lower-triangular matrices $\mathbf{A}(x,\nu;\alpha, \beta,\gamma)$ and 
$\mathbf{B}(x,\nu;\alpha, \beta,\gamma)$ by}
\begin{equation}
\left\{
\begin{array}{ll}
A_{n,k}(x,\nu;\alpha, \beta,\gamma) = 
\displaystyle (-1)^k\binom{n+\alpha}{k+\alpha} \,   
F(k-n,-(\beta+n)\nu;-(\gamma+n);x),
\\ \\
B_{n,k}(x,\nu;\alpha, \beta,\gamma) = 
\displaystyle (-1)^k\binom{n+\alpha}{k+\alpha} \; \cdot 
\\ 
\qquad \qquad \qquad \qquad \; \; 
\displaystyle 
\bigg[ \; \frac{\gamma+k}{\beta+k} \, F(k-n,(\beta+k)\nu;\gamma+k;x) 
\; + 
\\
\qquad \qquad \qquad \qquad \; \; \; \; 
\displaystyle 
\frac{\beta-\gamma}{\beta+k} \, F(k-n,(\beta+k)\nu;1+\gamma+k;x) \; \bigg]
\end{array} \right.
\label{DefABxNU}
\end{equation}
\textbf{for $1 \leqslant k \leqslant n$. The inversion formula}
\begin{equation}
T_n = \sum_{k=1}^{n}A_{n,k}(x,\nu)S_k 
\Longleftrightarrow 
S_n = \sum_{k=1}^{n}B_{n,k}(x,\nu)T_k, \quad n \in \mathbb{N}^*,
\label{eq:inversionR}
\end{equation}
\textbf{then holds for any pair of complex sequences 
$(S_n)_{n \in \mathbb{N}^*}$ and $(T_n)_{n \in \mathbb{N}^*}$.}
\label{PropIn}
\end{theorem}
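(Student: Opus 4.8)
The plan is to derive Theorem~\ref{PropIn} from the inversion criterion of Theorem~\ref{theoMaininversionR}. The first step is to recognise the two matrices of (\ref{DefABxNU}) as instances of the template (\ref{DefAB}). Expanding the terminating Gauss series gives $F(k-n,-(\beta+n)\nu;-(\gamma+n);x)=\sum_{m=0}^{n-k}\frac{(k-n)_m}{m!}\,a_m\,x^m$ with $a_m=\frac{(-(\beta+n)\nu)_m}{(-(\gamma+n))_m}$, so $A_{n,k}(x,\nu;\alpha,\beta,\gamma)$ coincides with $A_{n,k}(x,\alpha)$ for the sequence $(a_m)$, which depends on $n$ alone. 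Treating the bracket defining $B_{n,k}$ the same way produces a sequence $(b_m)$ depending on $k$ alone; hence $(a_m)$ and $(b_m)$ are \emph{independent} in the required sense and satisfy $a_0=b_0=1$. By Theorem~\ref{theoMaininversionR} it then remains only to verify the scalar condition $[x^{n-k}]f(-x)g(x)=\delta_{n,k}$ for $1\le k\le n$, where $f$ and $g$ are the exponential generating functions of $(a_m)$ and $(b_m)$.

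The decisive preliminary simplification is to collapse the two-term bracket. Writing $P=\beta+k$, $Q=\gamma+k$, the factor $((\beta+k)\nu)_m=(P\nu)_m$ times $\frac{Q}{P(Q)_m}+\frac{P-Q}{P(Q+1)_m}$ reduces, over the common denominator $(Q+1)_m$, to the single closed form
\[
b_m=\frac{(P+1)_m\,(P\nu)_m}{(P)_m\,(Q+1)_m},
\]
so that $f(x)={}_1F_1(-(\beta+n)\nu;-(\gamma+n);x)$ and $g(x)={}_2F_2(P+1,P\nu;P,Q+1;x)$. With $d=n-k\ge 0$ (so $\beta+n=P+d$ and $\gamma+n=Q+d$), I would expand $[x^d]f(-x)g(x)$ as a convolution and convert each Pochhammer $(\lambda)_{d-m}$ to one in $m$ via $(\lambda)_{d-m}=(-1)^m(\lambda)_d/(1-\lambda-d)_m$. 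The crucial cancellation is that the numerator factor $(-(\beta+n)\nu)_m=(-(P+d)\nu)_m$ carried by $a_m$ produces, after reflection, the very denominator parameter $(-(\gamma+n))_m=(-(Q+d))_m$, and these cancel, so the whole convolution contracts to a single terminating series,
\[
[x^d]f(-x)g(x)=\frac{(P+1)_d(P\nu)_d}{(P)_d(Q+1)_d\,d!}\;{}_3F_2\!\left(\begin{matrix}-d,\ 1-P-d,\ -(P+d)\nu\\ -P-d,\ 1-P\nu-d\end{matrix};1\right).
\]

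The main obstacle is the evaluation of this ${}_3F_2$: a direct check of the parameter balance shows it is \emph{not} Saalschützian (the balance is off by $d\nu-1$), so Pfaff--Saalschütz does not apply. The way through is to exploit that the upper parameter $1-P-d$ exceeds the lower parameter $-P-d$ by exactly one, giving $\frac{(1-P-d)_m}{(-P-d)_m}=\frac{P+d-m}{P+d}$. Splitting $P+d-m=(P+d)-m$ rewrites the ${}_3F_2$ as a difference of two terminating ${}_2F_1(1)$'s, each summable by the Chu--Vandermonde identity ${}_2F_1(-d,b;c;1)=(c-b)_d/(c)_d$; one then checks, using the telescoping relation $(W)_d=(W)_{d-1}\cdot d\nu$ with $W=1-d(1-\nu)$, that the two contributions are equal and cancel. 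Hence the ${}_3F_2$ vanishes for every $d\ge 1$, while the $d=0$ term equals $1$, which is precisely $[x^{n-k}]f(-x)g(x)=\delta_{n,k}$. Theorem~\ref{theoMaininversionR} then gives $\mathbf{A}\mathbf{B}=\mathbf{Id}$ and hence the inversion (\ref{eq:inversionR}). The only remaining care is to observe that the established relation is polynomial in $\alpha,\beta,\gamma,\nu$, so the generic non-vanishing of denominators such as $(P)_d$ or $(Q+1)_d$ entails no loss of generality.
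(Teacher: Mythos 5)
Your proposal is correct, and it rests on the same foundation as the paper's proof: both invoke the criterion of Theorem \ref{theoMaininversionR} for the same pair of independent sequences $a_{m;n}=(-(\beta+n)\nu)_m/(-\gamma-n)_m$ and $b_{m;k}$, so that everything reduces to the single coefficient identity (\ref{Invers0}). Where you genuinely diverge is in how that identity is verified. The paper never merges the two terms of the bracket defining $B_{n,k}$; instead it removes all $\gamma$-dependence through the identities $1/\{(-\gamma-n)_m(\gamma+k)_{n-k-m}\}=(-1)^m(\gamma+n-m)\Gamma(\gamma+k)/\Gamma(1+\gamma+n)$ and $1/\{(-\gamma-n)_m(1+\gamma+k)_{n-k-m}\}=(-1)^m\Gamma(1+\gamma+k)/\Gamma(1+\gamma+n)$, and then recognizes the surviving $\gamma$-free sum as a coefficient of a product of binomial series,
\[
X^{(n,k)}_{n-k}=[x^{n-k}]\Bigl\{(\beta+n)(1-x)^{(n-k)\nu}+(\beta+n)\nu\,x\,(1-x)^{(n-k)\nu-1}\Bigr\},
\]
whose vanishing for $n>k$ follows from $(-d\nu)_d=-d\nu\,(1-d\nu)_{d-1}$ with $d=n-k$. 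You instead collapse the bracket into the single Pochhammer ratio $b_{m;k}=(P+1)_m(P\nu)_m/\{(P)_m(Q+1)_m\}$ (a simplification absent from the paper), recast the convolution as a terminating ${}_3F_2(1)$, and exploit the contiguous pair $(1-P-d,\,-P-d)$ to split it into two Chu--Vandermonde evaluations that cancel via $(1-d(1-\nu))_d=d\nu\,(1-d(1-\nu))_{d-1}$, which is the exact counterpart of the paper's cancellation identity. The substance is equivalent (Chu--Vandermonde is itself the coefficient identity for $(1-x)^a(1-x)^b=(1-x)^{a+b}$), but the packaging differs in one practical respect: your route introduces spurious denominators $(P)_d$ and $(1-P\nu-d)_d$ that vanish for special parameter values, so your closing genericity remark (rationality in the parameters plus density of the generic set) is not optional but an essential step of the argument, whereas the paper's binomial-series computation stays polynomial and needs no such argument beyond the division by $\beta+k$ already built into the statement of (\ref{DefABxNU}).
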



\begin{proof}
To show that $\mathbf{A} \mathbf{B} = \mathrm{\textbf{Id}}$ for matrices 
$\mathbf{A}$ and $\mathbf{B}$ defined in (\ref{DefABxNU}), we apply  Theorem \ref{theoMaininversionR}. From definition (\ref{DefAB}), we first specify the sequences $(a_{m;n,k})_{m \in \mathbb{N}}$ and 
$(b_{m;n,k})_{m \in \mathbb{N}}$ for a given pair $(n,k)$.

From the standard definition (\ref{DefFpoly}) of Gauss Hypergeometric function $F(\alpha,\beta,\gamma;\cdot)$, the sequences 
$(a_{m;n,k})_{m \in \mathbb{N}}$ and $(b_{m;n,k})_{m \in \mathbb{N}}$ respectively associated with matrices $\mathbf{A}$ and $\mathbf{B}$ defined in (\ref{DefABxNU}) are readily given by
$$
\left\{
\begin{array}{ll}
a_{m;n,k} = \displaystyle 
\frac{(-(\beta+n)\nu)_m}{(-\gamma-n)_m}, 
\\ \\
b_{m;n,k} = \displaystyle 
\frac{\gamma+k}{\beta+k} \, \frac{((\beta+k)\nu)_m}{(\gamma+k)_m} + 
\frac{\beta-\gamma}{\beta+k} \, \frac{((\beta+k)\nu)_m}{(1+\gamma+k)_m}
\end{array} \right.
$$ 
for all $m \geqslant 0$ and given $n, k \in \mathbb{N}^*$, 
$1 \leqslant k \leqslant n$; in particular, $a_{0;n,k}=b_{0;n,k} = 1$. It appears that $a_{m;n,k}$ does not depend on $k$, while $b_{m;n,k}$ does not depend on $n$;  sequences $(a_{m;n,k})$ and $(b_{m;n,k})$ are therefore independent. In the following, we remove the index $k$ from 
$a_{m;n,k}$ and the index $n$ from $b_{m;n,k}$.

To verify the inversion criterion (\ref{Invers0}), let $f_n$ and $g_k$ denote the exponential generating function of sequence 
$(a_{m;n})_{m \geqslant 0}$ and $(b_{m;k})_{m \geqslant 0}$, respectively. The product $f_n(-x)g_k(x)$ is then given by
$$
f_n(-x)g_{k}(x) = \left ( \sum_{m \geqslant 0} (-1)^m 
\frac{a_{m;n}}{m!} \, x^m \right ) 
\left ( \sum_{m \geqslant 0} \frac{b_{m;k}}{m!} \, x^m \right )
= \sum_{\ell \geqslant 0} U_\ell^{(n,k)} \, x^\ell
$$
where we set, for any $\ell \geqslant 0$, 
\begin{align}
U_\ell^{(n,k)} = & \, \sum_{m=0}^\ell (-1)^m \frac{(-(\beta+n)\nu)_m}{(-\gamma-n)_m \, m!} \; \times 
\nonumber \\
& \, \left[\frac{\gamma+k}{\beta+k} \, 
\frac{((\beta+k)\nu)_{\ell-m}}{(\ell-m)!(\gamma+k)_{\ell-m}} + 
\frac{\beta-\gamma}{\beta+k} \, 
\frac{((\beta+k)\nu)_{\ell-m}}{(\ell-m)!(1+\gamma+k)_{\ell-m}}\right].
\label{DefU}
\end{align}
Criterion (\ref{Invers0}) thus amounts to show that $U_{n-k}^{(n,k)} = 1$ if $n = k$ and $U_{n-k}^{(n,k)} = 0$ if $1 \leqslant k < n$. Let then 
$n \geqslant k$ and consider expression (\ref{DefU}) applied to 
$\ell = n-k$; using the identities
\begin{align}
\frac{1}{(-\gamma-n)_m (\gamma+k)_{n-k-m}} = & \, 
(-1)^m \, \frac{(\gamma+n-m)\Gamma(\gamma+k)}{\Gamma(1+\gamma+n)},
\nonumber \\
\frac{1}{(-\gamma-n)_m (1+\gamma+k)_{n-k-m}} = & \, 
(-1)^m \, \frac{\Gamma(1+\gamma+k)}{\Gamma(1+\gamma+n)}
\nonumber 
\end{align}
and writing $(\gamma+k)\Gamma(\gamma+k) = \Gamma(1+\gamma+k)$, the term 
$\Gamma(1+\gamma+k)$ factors out and we obtain
\begin{equation}
U_{n-k}^{(n,k)} = 
\frac{\Gamma(1+\gamma+k)}{(\beta+k)\Gamma(1+\gamma+n)} \, X_{n-k}^{(n,k)}
\label{DefUbis}
\end{equation}
where the sum 
$$
X_{n-k}^{(n,k)} = \sum_{m=0}^{n-k} 
\frac{(-(\beta+n)\nu)_m}{m!}\cdot 
\frac{(\beta+n-m)((\beta+k)\nu)_{n-k-m}}{(n-k-m)!}
$$
is independent of $\gamma$. Splitting the term $(\beta + n - m)$ of the second factor inside this sum into $(\beta + n)$ and $-m$, 
$X_{n-k}^{(n,k)}$ can be written as the sum of convolution terms
\begin{align}
X_{n-k}^{(n,k)} = \, [x]^{n-k} 
\biggl \{ & \, (\beta+n) \cdot h(x;(\beta+n)\nu) \, h(x;-(\beta+k)\nu) 
\; - 
\nonumber \\
& \, x \cdot 
\left ( \frac{\partial h}{\partial x}(x;(\beta+n)\nu) \right )
\, h(x;-(\beta+k)\nu) \biggr \}
\label{DefX}
\end{align}
where we set
$h(x;\lambda) =\sum_{m \geqslant 0} (-\lambda)_m \, x^m/m! 
= (1-x)^{\lambda}$ for $\vert x \vert < 1$ and any $\lambda$; applying this definition of $h(x;\lambda)$ successively to arguments 
$\lambda = (\beta + n)\nu$ and $\lambda = -(\beta + k)\nu$ then enables us to reduce (\ref{DefX}) to
\begin{align}
X_{n-k}^{(n,k)} = 
& \, [x]^{n-k}\left\{(\beta + n)(1-x)^{(n-k)\nu} + 
(\beta+n)\nu x(1-x)^{(n-k)\nu-1}\right\}
\nonumber\\
= & \, (\beta+n) \, \frac{((k-n)\nu)_{n-k}}{(n-k)!} + 
(\beta+n)\nu \, \frac{(1+(k-n)\nu)_{n-k-1}}{(n-k-1)!}.
\label{X1}
\end{align}
After simplification, the latter expression eventually yields
$X_{n-k}^{(n,k)} = (\beta + n) \, \delta_{n,k}$ 
(for $n = k$, note that the second term in (\ref{X1}) is zero since it has denominator $(-1)! = \Gamma(0) = \infty$); the latter equality and 
(\ref{DefUbis}) together provide $U_{n-k}^{(n,k)} = \delta_{n,k}$. The inversion condition (\ref{Invers0}) is therefore fulfilled for all  
$n, \, k \geqslant 1$. We conclude that inverse relation 
(\ref{eq:inversionR}) holds for any pair of sequences 
$(S_n)_{n \geqslant 1}$ and $(T_n)_{n \geqslant 1}$.
\end{proof}


\section{Generating functions}
\label{GF}


Functional relations are now derived for the Ordinary (resp. Exponential) Generating Functions, OGFs (resp.  EGFs), of sequences related by the inversion formula.
 
\subsection{Relations for OGF's}
When $\alpha = \gamma$, the inversion formula 
(\ref{eq:inversionR}) translates into reciprocal relations for the O.G.F.'s  of related sequences $(S_n)_{n \in \mathbb{N}^*}$ and 
$(T_n)_{n \in \mathbb{N}^*}$; note that the restriction 
$\gamma \notin -\mathbb{N}$ in Theorem \ref{PropIn} cancels out when 
$\alpha = \gamma$. There is generally no such explicit relation, however,  when $\alpha \neq \gamma$.
     
\begin{corol}
\textbf{For given complex parameters $x, \, \nu, \, \beta$ and $\gamma$, let $(S_n)_{n \in \mathbb{N}^*}$ and $(T_n)_{n \in \mathbb{N}^*}$ be sequences related by the inversion formulas (\ref{eq:inversionR}) of Theorem \ref{PropIn}, that is, 
$S = \mathbf{B}(x,\nu;\gamma, \beta,\gamma) \cdot T \Leftrightarrow T = \mathbf{A}(x,\nu;\gamma, \beta,\gamma) \cdot S$.}

\textbf{Denote by $\mathfrak{G}_S(z)$ and $\mathfrak{G}_T(z)$ the formal O.G.F.'s of $S$ and $T$, respectively. Defining the mapping $\Xi$ 
(depending on parameters $x$ and $\nu$) by}
\begin{equation}
\Xi(z) = \frac{z}{z-1}\left(\frac{1-z}{1-(1-x)z}\right)^{\nu},
\label{DefXi}
\end{equation}
\textbf{the relations}
\begin{equation}
\left\{
\begin{array}{ll}
\mathfrak{G}_S(z) = \displaystyle 
\left [ \frac{1-\nu}{1-z} + \frac{\nu}{1-(1-x)z} \right ] 
\frac{(-\Xi(z)/z)^{\beta}}{(1-z)^{\gamma-\beta}} \cdot  
\mathfrak{G}_T(\Xi(z)), 
\\ \\
\mathfrak{G}_T(\xi) = \displaystyle 
\left [ \frac{1-\nu}{1-\Omega(\xi)} + 
\frac{\nu}{1-(1-x)\Omega(\xi)} \right ]^{-1} 
\frac{(-\Omega(\xi)/\xi)^{\beta}}{(1-\Omega(\xi))^{\beta-\gamma}} \cdot  \mathfrak{G}_S(\Omega(\xi)) 
\end{array} \right.
\label{eq:OGFRelation}
\end{equation}
\textbf{hold, where $\Omega$ is the inverse mapping 
$\Xi(z) = \xi \Leftrightarrow z = \Omega(\xi)$.} 
\label{corOGF}
\end{corol}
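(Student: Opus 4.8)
The plan is to prove the forward relation (the first line of (\ref{eq:OGFRelation})) directly from $S=\mathbf{B}\,T$ and then obtain the second line by compositional inversion. Writing $\mathfrak{G}_S(z)=\sum_{n\ge1}S_nz^n=\sum_{n\ge1}z^n\sum_{k=1}^nB_{n,k}T_k$ and exchanging the (finite inner) summation gives $\mathfrak{G}_S(z)=\sum_{k\ge1}T_k\,\Sigma_k(z)$, where $\Sigma_k(z)=\sum_{n\ge k}B_{n,k}(x,\nu;\gamma,\beta,\gamma)\,z^n$ is the generating function of the $k$-th column of $\mathbf{B}$. The whole statement reduces to showing that this column function factorises as $\Sigma_k(z)=\Phi(z)\,\Xi(z)^k$ with a $k$-independent prefactor $\Phi(z)$ equal to $\left[\frac{1-\nu}{1-z}+\frac{\nu}{1-(1-x)z}\right]\frac{(-\Xi(z)/z)^{\beta}}{(1-z)^{\gamma-\beta}}$: summing $\Phi(z)\,\Xi(z)^k$ against $T_k$ then reassembles $\Phi(z)\,\mathfrak{G}_T(\Xi(z))$.

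To evaluate $\Sigma_k(z)$ I would put $p=n-k$ and use $\binom{n+\gamma}{k+\gamma}=(k+\gamma+1)_p/p!$, handling the two Hypergeometric contributions of $B_{n,k}$ in (\ref{DefABxNU}) separately; both carry numerator parameter $b=(\beta+k)\nu$, with denominators $c=\gamma+k$ and $c+1$. Expanding each $F(-p,b;c;x)$ as its defining series, I would interchange the sums over $p$ and the series index $m$, perform the sum over $p\ge m$ via $\sum_{q\ge0}(a)_qz^q/q!=(1-z)^{-a}$, and set $w=-xz/(1-z)$ so that $1-w=\frac{1-(1-x)z}{1-z}$. The $(c+1)$-contribution then collapses to $(1-w)^{-b}$, while the $c$-contribution carries an extra factor $(c+m)/c$ which, after writing $m\,w^m=w\frac{d}{dw}w^m$, yields $(1-w)^{-b-1}\bigl(1-w+\tfrac{bw}{c}\bigr)$.

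The decisive step is combining the two contributions. With $c=\gamma+k$ the coefficients $\frac{\gamma+k}{\beta+k}$ and $\frac{\beta-\gamma}{\beta+k}$ in (\ref{DefABxNU}) are exactly calibrated so that the $(1-w)$-proportional terms merge through $c+\beta-\gamma=\beta+k$ into $(\beta+k)(1-w)$, leaving the bracket $(\beta+k)(1-w)+bw$; since $b=(\beta+k)\nu$ the factor $\beta+k$ cancels and the bracket becomes simply $1-(1-\nu)w$. Collecting powers, I expect $\Sigma_k(z)=(-1)^kz^k(1-z)^{-\gamma-k-1}(1-w)^{-(\beta+k)\nu-1}\bigl[1-(1-\nu)w\bigr]$, whose entire $k$-dependence is $\bigl(-z(1-z)^{-1}(1-w)^{-\nu}\bigr)^k$; substituting $(1-w)^{-\nu}=\bigl(\frac{1-z}{1-(1-x)z}\bigr)^{\nu}$ identifies this base with $\Xi(z)$ of (\ref{DefXi}), so $\Sigma_k(z)=\Phi(z)\,\Xi(z)^k$ with $\Phi(z)=(1-z)^{-\gamma-1}(1-w)^{-\beta\nu-1}\bigl[1-(1-\nu)w\bigr]$. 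Since $(-\Xi(z)/z)^{\beta}=(1-z)^{-\beta}(1-w)^{-\beta\nu}$, matching $\Phi$ to the claimed prefactor reduces, after dividing both by $(1-z)^{-\gamma}(1-w)^{-\beta\nu}$ and using $(1-z)(1-w)=1-(1-x)z$, to the single rational identity $\frac{1-(1-\nu)w}{1-(1-x)z}=\frac{1-\nu}{1-z}+\frac{\nu}{1-(1-x)z}$, verified by clearing denominators.

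Finally, the second line of (\ref{eq:OGFRelation}) follows by inversion. Since $\Xi(z)=-z+O(z^2)$, it admits a compositional inverse $\Omega$ with $\Omega(\xi)=-\xi+O(\xi^2)$, and $-\Xi(z)/z\to1$ as $z\to0$ makes $(-\Xi(z)/z)^{\beta}$ a legitimate power series, so the substitution $z=\Omega(\xi)$ is valid on formal series; solving the first relation for $\mathfrak{G}_T(\xi)$ and using $(-\Xi(z)/z)^{\beta}=(-\xi/\Omega(\xi))^{\beta}$ gives the stated formula. I expect the main obstacle to be the closed-form summation of the column function $\Sigma_k(z)$ — organising the double-sum interchange and the two separate Hypergeometric summations so that the powers of $1-z$ and $1-w$ and the bracket collapse exactly as above; once $\Sigma_k(z)=\Phi(z)\,\Xi(z)^k$ is established, the remaining prefactor matching and the inversion are routine.
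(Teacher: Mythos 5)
Your proposal is correct and follows essentially the same route as the paper's proof: exchanging the order of summation reduces everything to the column generating function of $\mathbf{B}$ (the paper's function $U$ in (\ref{DefUU})), the two Hypergeometric contributions are merged exactly as in the paper via $(\gamma+k)+(\beta-\gamma)=\beta+k$ so that the $k$-dependence factors as $\Xi(z)^k$, and the second relation follows by inverting $\Xi$. The only cosmetic differences are that you derive the two needed evaluations (the $c$- and $(c+1)$-contributions) by direct series manipulation rather than quoting the identities $F(\alpha,\beta;\alpha;x)=(1-x)^{-\beta}$ and $F(1+\alpha,\beta;\alpha;x)=(1-x)^{-\beta}+\beta x(1-x)^{-1-\beta}/\alpha$, and that you justify the substitution $z=\Omega(\xi)$ at the level of formal power series instead of the analytic inverse function theorem.
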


\begin{proof}
\textbf{a)} From the definition (\ref{DefABxNU}) of matrix $\mathbf{B}(x,\nu;\gamma, \beta,\gamma)$, the generating function of the sequence $S = \mathbf{B}(x,\nu;\gamma, \beta,\gamma) \cdot T$ is given by 
\begin{align}
& \mathfrak{G}_S(z) = \sum_{n \geqslant 1} z^n 
\left ( \sum_{k=1}^n B_{n,k}(x,\nu;\gamma,\beta,\gamma) T_k \right ) \; = 
\nonumber \\
& \sum_{n \geqslant 1} z^n 
\sum_{k=1}^n T_k  (-1)^k \binom{n+\gamma}{k+\gamma} 
\biggl [\frac{\gamma+k}{\beta+k} \, F(k-n,(\beta+k)\nu;\gamma+k;x) \; + 
\nonumber \\
& \qquad \qquad \qquad \qquad \qquad \qquad \; \, 
\frac{\beta-\gamma}{\beta+k} \, F(k-n,(\beta+k)\nu;1+\gamma+k;x) \biggr ]
\nonumber
\end{align}
that is,
\begin{align}
\mathfrak{G}_S(z) = & \, 
\sum_{k \geqslant 1}(-z)^k T_k 
\biggl [ \frac{\gamma+k}{\beta+k} \, 
U(\gamma+k,(\beta+k)\nu,\gamma+k;z,x) \; + 
\nonumber \\
& \qquad \qquad \qquad 
\frac{\beta-\gamma}{\beta+k} \, U(\gamma+k,(\beta+k)\nu,1+\gamma+k;z,x) \biggr ]
\label{GSForm1}
\end{align}
where we define $U(\alpha_1,\alpha_2,\alpha_3;z,x)$ by
\begin{equation}
U(\alpha_1,\alpha_2,\alpha_3;z,x) = \sum_{n \geqslant 0}\frac{(1+\alpha_1)_n}{n!} \,z^n \, F(-n,\alpha_2;\alpha_3;x).
\end{equation}
Applying definition (\ref{DefFpoly}) to Hypergeometric polynomial 
$F(-n,\alpha_2,\alpha_3;x)$ and writing $(-n)_m = (-1)^m \, n!/(n-m)!$, then interchanging the summation order and using the index change 
$n \mapsto n - k$, an expression for $U$ is easily derived in terms of another Gauss hypergeometric function, namely
\begin{equation}
U(\alpha_1,\alpha_2,\alpha_3;z,x) = 
(1-z)^{-1-\alpha_1} \cdot 
F \left( 1+\alpha_1,\alpha_2;\alpha_3;\frac{xz}{z-1} \right).
\label{DefUU}
\end{equation}
Setting $\alpha_1 = \gamma+ k$, $\alpha_2 = (\beta+k)\nu$ and either 
$\alpha_3 = \gamma + k$ or $\alpha _3 = 1 + \gamma + k$ in 
(\ref{DefUU}), the right-hand side of (\ref{GSForm1}) then reads
\begin{align}
& \frac{\gamma+k}{\beta+k}\, U(\gamma+k,(\beta+k)\nu,\gamma+k;z,x) + 
\frac{\beta-\gamma}{\beta+k} \, U(\gamma+k,(\beta+k)\nu,1+\gamma+k;z,x) 
\; = 
\nonumber \\
& (1-z)^{-1-\gamma-k} 
\biggl [ \frac{\gamma+k}{\beta+k}\, 
F\left(1+\gamma+k,(\beta+k)\nu;\gamma+k;\frac{xz}{z-1}\right) \; +
\nonumber \\
& \frac{\beta-\gamma}{\beta+k} \, 
F\left(1+\gamma+k,(\beta+k)\nu;1+\gamma+k;\frac{xz}{z-1}\right) \biggr ]
\nonumber
\end{align}
which, after invoking known identities 
$F(\alpha,\beta;\alpha;x) = (1-x)^{-\beta}$ together with 
$F(1+\alpha,\beta;\alpha;x) = (1-x)^{-\beta} + 
\beta x(1-x)^{-1-\beta}/\alpha$, further simplifies to
\begin{align}
& (1-z)^{-1-\gamma-k}
\biggl [ \biggl \{ \frac{\gamma+k}{\beta+k} \, 
\, \left(1-\frac{xz}{z-1}\right)^{-(\beta+k)\nu} + 
\nu\frac{xz}{z-1} \left(1-\frac{xz}{z-1}\right)^{-1-(\beta+k)\nu} 
\biggr \} 
\nonumber \\ 
& \qquad \qquad \qquad 
+ \; \frac{\beta-\gamma}{\beta+k} \; 
\left(1-\frac{xz}{z-1}\right)^{-(\beta+k)\nu} 
\biggr ] \; = 
\nonumber\\
& (1-z)^{-1-\gamma-k}\left(1-\frac{xz}{z-1}\right)^{-(\beta+k)\nu}
\left(1-\frac{\nu x z}{1-(1-x)z}\right).
\nonumber
\end{align}
Replacing the latter in the right-hand side of (\ref{GSForm1}), the expression of $\mathfrak{G}_S(z)$ then reduces to
\begin{align}
\mathfrak{G}_S(z) = & \, 
\frac{1}{1-z} \left(1- \frac{\nu xz}{1-(1-x)z}\right)
(1-z)^{-\gamma} \left( 1-\frac{xz}{z-1}\right )^{-\beta\nu} \; \times 
\nonumber \\
& \, \sum_{k \geqslant 1}(-\frac{z}{1-z} (1-\frac{xz}{z-1})^{-\nu})^k T_k,
\nonumber
\end{align}
that is,
\begin{equation}
\mathfrak{G}_S(z) = 
\frac{1}{1-z} \left(1- \frac{\nu xz}{1-(1-x)z}\right) 
\frac{(-\Xi(z)/z)^{\beta}}{(1-z)^{\gamma-\beta}} 
\; \mathfrak{G}_T(\Xi(z))
\label{GSForm2}
\end{equation}
with $\Xi(z)$ defined as in (\ref{DefXi}). Writing 
$$
\frac{1}{1-z} \left [1- \frac{\nu xz}{1-(1-x)z} \right ] = 
\frac{1-\nu}{1-z} + \frac{\nu}{1-z(1-x)},
$$
(\ref{GSForm2}) eventually provides the first relation 
(\ref{eq:OGFRelation}).

\textbf{b)} For any parameters $x$ and $\nu$, the function 
$z \mapsto \Xi(z)$ is analytic in a neighborhood of $z = 0$, with 
$\Xi(0) = 0$ and $\Xi'(z) \sim -z$ as $z \rightarrow 0$, hence 
$\Xi'(0) = -1 \neq 0$. By the Implicit Function Theorem, $\Xi$ has an analytic inverse $\Omega:\xi \mapsto \Omega(\xi)$ in a neighborhood of 
$\xi = 0$; the inversion of the first relation (\ref{eq:OGFRelation}) consequently provides the second relation (\ref{eq:OGFRelation}), as claimed. 
\end{proof}

\noindent 
Relations (\ref{eq:OGFRelation}) between formal generating series can also be understood as a functional identity between the analytic functions  
$z \mapsto \mathfrak{G}_S(z)$ and $z \mapsto \mathfrak{G}_T(z)$ in some neighborhood of the origin $z = 0$ in the complex plane. Now, Corollary 
\ref{corOGF} can be supplemented by making explicit the inverse mapping 
$\Omega$ involved in the 2nd relation (\ref{eq:OGFRelation}). To this end, we state some preliminary properties (in the sequel, $\log$ will denote the determination of the logarithm in the complex plane cut along the negative semi-axis $]-\infty,0]$ with $\log(1) = 0$).

\begin{lemma}
\textbf{Let $R(\nu) = \vert e^{-\psi(\nu)} \vert$ where}
$$
\psi(\nu) = \left\{
\begin{array}{ll}
(1-\nu)\log(1-\nu) + \nu\log(-\nu), \quad \; 
\nu \in \mathbb{C} \setminus \, [0,+\infty[,
\\ \\
(1-\nu)\log(1-\nu) + \nu\log(\nu), \; \quad \; \; \, 
\nu \in \mathbb{R}, \; 0 \leqslant \nu < 1, 
\\ \\
(1-\nu)\log(\nu-1) + \nu\log(\nu), \; \quad \; \; 
\nu \in \mathbb{R}, \; \nu \geqslant 1.
\end{array} \right.
$$
\textbf{The power series}
$$
\pmb{\Sigma}(w) = 
\sum_{b \geqslant 1} 
\frac{\Gamma(b(1-\nu))}{\Gamma(b)\Gamma(1-b\nu)} \cdot w^b, 
\qquad \vert w \vert < R(\nu),
$$
\textbf{is given by}
\begin{equation}
\pmb{\Sigma}(w) = \frac{\Theta(w)-1}{\nu \, \Theta(w) + 1 - \nu}
\label{U0}
\end{equation}
\textbf{where $\Theta:w \mapsto \Theta(w)$ denotes the unique analytic solution 
(depending on $\nu$) to the implicit equation}
\begin{equation}
1 - \Theta + w \cdot \Theta^{1-\nu} = 0, 
\qquad \vert w \vert < R(\nu),
\label{DefTheta}
\end{equation}
\textbf{verifying $\Theta(0) = 1$.}
\label{lemmU}
\end{lemma}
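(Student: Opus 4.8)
The plan is to read the implicit equation (\ref{DefTheta}) as a Lagrange inversion problem and to reduce the series (\ref{U0}) to the logarithmic derivative of $\Theta$, so that the coefficients drop out of the Lagrange--B\"urmann formula. First I would substitute $\Theta = 1+t$, which turns (\ref{DefTheta}) into $t = w\,(1+t)^{1-\nu}$, i.e. $t = w\,\phi(t)$ with $\phi(t) = (1+t)^{1-\nu}$ and $\phi(0) = 1 \neq 0$. Since $\phi$ is analytic near $t=0$, the Lagrange--B\"urmann theorem furnishes a unique analytic solution $t=t(w)$ with $t(0)=0$ on a disc about the origin, which is exactly the unique analytic $\Theta(w)$ with $\Theta(0)=1$ asserted in the statement.

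Next I would establish the identity $\pmb{\Sigma}(w) = w\,\frac{d}{dw}\log\Theta(w)$, which is the crux of the argument. Differentiating (\ref{DefTheta}) in $w$ gives $\Theta' = \Theta^{1-\nu} + w(1-\nu)\Theta^{-\nu}\Theta'$; substituting $w\Theta^{-\nu} = (\Theta-1)/\Theta$ (read directly off (\ref{DefTheta})) into the bracket $1 - w(1-\nu)\Theta^{-\nu}$ collapses it to $(\nu\Theta+1-\nu)/\Theta$, so that $\Theta' = \Theta^{2-\nu}/(\nu\Theta+1-\nu)$. Hence
\[
w\,\frac{d}{dw}\log\Theta = w\,\frac{\Theta'}{\Theta} = \frac{w\,\Theta^{1-\nu}}{\nu\Theta+1-\nu} = \frac{\Theta-1}{\nu\Theta+1-\nu},
\]
where the last equality uses $w\Theta^{1-\nu} = \Theta-1$. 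The right-hand side is precisely the proposed closed form (\ref{U0}), so it remains only to compute the Taylor coefficients of $\log\Theta$.

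For this I would apply Lagrange--B\"urmann to $H(t) = \log(1+t) = \log\Theta$, for which $H'(t) = 1/(1+t)$ and $H'(t)\,\phi(t)^b = (1+t)^{b(1-\nu)-1}$; therefore
\[
[w^b]\log\Theta = \frac{1}{b}\,[t^{b-1}](1+t)^{b(1-\nu)-1} = \frac{1}{b}\binom{b(1-\nu)-1}{b-1}.
\]
Multiplying by $b$ to account for the operator $w\,d/dw$ yields $[w^b]\pmb{\Sigma}(w) = \binom{b(1-\nu)-1}{b-1}$, and rewriting this through $\binom{a}{k}=\Gamma(a+1)/[\Gamma(k+1)\Gamma(a-k+1)]$ with $a=b(1-\nu)-1$ and $k=b-1$ reproduces exactly $\Gamma(b(1-\nu))/[\Gamma(b)\Gamma(1-b\nu)]$. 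This settles (\ref{U0}) as an identity of formal power series, and hence of analytic functions inside the disc of convergence.

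It then remains to identify the radius $R(\nu)$. Since multiplying coefficients by $b$ does not change the radius, $\pmb{\Sigma}$ and $\log\Theta$ share it, and it equals the modulus of the nearest singularity of $\Theta$, namely the branch point where the Implicit Function Theorem fails for (\ref{DefTheta}). Solving (\ref{DefTheta}) together with $\partial_\Theta(1-\Theta+w\Theta^{1-\nu}) = -1+w(1-\nu)\Theta^{-\nu}=0$ gives $\Theta_c=(\nu-1)/\nu$ and $w_c = -(\nu-1)^{\nu-1}/\nu^{\nu}$; equivalently, Stirling's formula applied to the coefficients $\binom{b(1-\nu)-1}{b-1}$ gives $\limsup_b|c_b|^{1/b} = |(1-\nu)^{1-\nu}(-\nu)^{\nu}|$, so that $R(\nu) = |(1-\nu)^{-(1-\nu)}(-\nu)^{-\nu}| = |e^{-\psi(\nu)}|$ with $\psi(\nu)=(1-\nu)\log(1-\nu)+\nu\log(-\nu)$. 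I expect this last step to be the main obstacle: turning this modulus into the explicit real-valued $\psi(\nu)$ of the statement requires choosing the correct determination of the complex powers, and it is exactly the crossing of the branch cut $]-\infty,0]$ by $1-\nu$ (when $\nu\geq 1$) or by $-\nu$ (when $0\leq\nu<1$) that forces the three-case definition of $\psi$; one must verify in each regime that the chosen branch keeps $R(\nu)$ a genuine positive radius of convergence.
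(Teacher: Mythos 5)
Your derivation of the closed form (\ref{U0}) is correct, and it takes a genuinely different route from the paper for that part. The paper rewrites the coefficient as $\sigma_b = -\tfrac{1}{\nu}\binom{b(1-\nu)-1}{b}$ and simply quotes the P\'olya--Szeg\H{o} identity \cite{PolSze72} (Problem 216), $1+\sum_{b\geqslant 1}\binom{\alpha+b\beta}{b}w^b = \Theta^{\alpha+1}/[(1-\beta)\Theta+\beta]$, with $\alpha=-1$, $\beta=1-\nu$. You instead prove the needed special case from scratch: the implicit differentiation giving $\Theta' = \Theta^{2-\nu}/(\nu\Theta+1-\nu)$, hence $w\,\tfrac{d}{dw}\log\Theta = (\Theta-1)/(\nu\Theta+1-\nu)$, is correct, and the Lagrange--B\"urmann computation $[w^b]\log\Theta = \tfrac{1}{b}\binom{b(1-\nu)-1}{b-1} = \tfrac{1}{b}\,\Gamma(b(1-\nu))/[\Gamma(b)\Gamma(1-b\nu)]$ checks out. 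What your route buys is self-containedness (the cited P\'olya--Szeg\H{o} formula is itself a Lagrange-inversion exercise, so the two arguments are the same machinery, but yours does not rest on an external reference); what the paper's buys is brevity.

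There is, however, a genuine gap in the determination of $R(\nu)$, which is half the content of the lemma and occupies half of the paper's proof (part a) of its appendix argument). You flag the branch-cut issue as ``the main obstacle'' but do not resolve it, and both of your suggested resolutions need real work. For the singularity route: since $\Theta^{1-\nu}$ is multivalued, the critical value $w_c = -(\nu-1)^{\nu-1}/\nu^{\nu}$ is only defined up to a choice of branch of $(\nu-1)^{\nu-1}$, and for non-real $\nu$ different branches have genuinely different moduli, so ``the modulus of the nearest branch point'' is not well defined without a careful analysis of the Riemann surface of $\Theta$; this route is delicate precisely where you need it. For the Stirling route, which is what the paper actually does: Stirling's formula cannot be applied directly to $\Gamma(1-b\nu)$ when $\nu\geqslant 0$ is real, because the argument $1-b\nu$ then tends to $-\infty$ along the real axis, outside Stirling's sector of validity and running through or near the poles of $\Gamma$ (indeed the coefficients vanish whenever $b\nu\in\mathbb{N}$). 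The paper's fix is the reflection formula, $\Gamma(1-b\nu)=\pi/[\sin(\pi b\nu)\Gamma(b\nu)]$ for $0\leqslant\nu<1$, and additionally the same rewriting of $\Gamma(b(1-\nu))$ when $\nu\geqslant 1$; this produces bounded oscillating factors $\sin(\pi b\nu)$ (harmless for the $b$-th root limsup) and replaces $\log(-\nu)$ by $\log\nu$, resp.\ $\log(1-\nu)$ by $\log(\nu-1)$ --- which is exactly where the three-case definition of $\psi$ comes from. (For real $\nu$ the exponents are real, so the alternative branch choices change only phases and the three cases give a consistent modulus $R(\nu)$; the case split is needed to make $\psi$ well defined under the paper's determination of $\log$ and, more importantly, to make the asymptotic analysis legitimate.) Without carrying out this case analysis, your argument establishes (\ref{U0}) only on some unspecified neighborhood of $w=0$, not on the full disc $\vert w\vert < R(\nu)$ asserted in the statement.
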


\noindent 
The proof of Lemma \ref{lemmU} is detailed in Appendix \ref{A3}. 

\begin{corol}
\textbf{For all $\nu \in \mathbb{C}$ and $x \neq 0$, the inverse mapping 
$\Omega$ of $\Xi$ defined in (\ref{DefXi}) can be expressed by}
\begin{equation}
\Omega(\xi) = 
\frac{\pmb{\Sigma}(x \, \xi)}{(1-x(1-\nu)) \, \pmb{\Sigma}(x \, \xi) - x}, 
\qquad \vert \xi \vert < \frac{R(\nu)}{\vert x \vert},
\label{InvXI}
\end{equation}
\textbf{in terms of power series $\pmb{\Sigma}(\cdot)$ defined in Lemma 
\ref{lemmU}.}
\label{corOGFbis}
\end{corol}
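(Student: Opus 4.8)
The plan is to verify directly that the right-hand side of (\ref{InvXI}) defines the inverse of $\Xi$, relying on the fact (already established in the proof of Corollary \ref{corOGF}) that $\Xi$ possesses a unique analytic inverse $\Omega$ near $\xi=0$ with $\Omega(0)=0$. It therefore suffices to produce an analytic function $z=z(\xi)$, $z(0)=0$, satisfying $\Xi(z(\xi))=\xi$, and to check that it coincides with (\ref{InvXI}). To this end I would introduce the Möbius change of variable
\begin{equation}
\tau(z)=\frac{1-z}{1-(1-x)z},
\label{PlanTau}
\end{equation}
so that the definition (\ref{DefXi}) becomes $\Xi(z)=-\frac{z}{1-z}\,\tau(z)^{\nu}$ with $\tau(0)=1$, matching the normalization $\Theta(0)=1$ of Lemma \ref{lemmU}.

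The crux of the argument is the algebraic identity
\begin{equation}
1-\tau(z)+x\,\Xi(z)\,\tau(z)^{1-\nu}=0 .
\label{PlanKey}
\end{equation}
I would prove it by direct substitution: from (\ref{PlanTau}) one has $\tau(z)-1=-xz/(1-(1-x)z)$, whence $(\tau(z)-1)/\tau(z)^{1-\nu}=-xz\,(1-z)^{\nu-1}(1-(1-x)z)^{-\nu}$, and expanding (\ref{DefXi}) shows that this is exactly $x\,\Xi(z)$. Identity (\ref{PlanKey}) states precisely that, on setting $w=x\,\Xi(z)$, the quantity $\tau(z)$ solves the implicit equation (\ref{DefTheta}); since $\tau(0)=1=\Theta(0)$, the uniqueness of the analytic solution asserted in Lemma \ref{lemmU} forces $\tau(z)=\Theta\bigl(x\,\Xi(z)\bigr)$. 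Writing $\xi=\Xi(z)$, i.e. $z=\Omega(\xi)$, this reads $\Theta(x\xi)=\tau(\Omega(\xi))=(1-\Omega(\xi))/(1-(1-x)\Omega(\xi))$.

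It then remains to unwind the two Möbius relations. Inverting (\ref{PlanTau}) gives $z=(1-\Theta)/(1-(1-x)\Theta)$, while inverting (\ref{U0}) expresses $\Theta(w)=(1+(1-\nu)\pmb{\Sigma}(w))/(1-\nu\,\pmb{\Sigma}(w))$. Substituting the latter, with $w=x\xi$, into the former and clearing the common factor $1-\nu\,\pmb{\Sigma}(x\xi)$, the numerator $1-\Theta$ reduces to $-\pmb{\Sigma}(x\xi)$ and the numerator of $1-(1-x)\Theta$ collapses to $x-(1-x(1-\nu))\,\pmb{\Sigma}(x\xi)$, yielding exactly (\ref{InvXI}); the admissible range $|x\xi|<R(\nu)$, i.e. $|\xi|<R(\nu)/|x|$, is inherited from the radius of convergence of $\pmb{\Sigma}$ in Lemma \ref{lemmU}. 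The only real difficulty is locating and verifying the identity (\ref{PlanKey}); once it is in hand the remainder is routine Möbius algebra, the one point requiring care being the appeal to uniqueness in Lemma \ref{lemmU} (together with the normalization at $z=0$) needed to identify $\tau(z)$ with the correct branch $\Theta(x\,\Xi(z))$.
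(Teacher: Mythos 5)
Your proposal is correct and follows essentially the same route as the paper's own proof: the paper likewise introduces the homographic involution $\theta = (1-z)/(1-(1-x)z)$, derives the implicit relation $1-\theta+x\,\Xi(z)\,\theta^{1-\nu}=0$ to identify $\theta=\Theta(x\,\Xi(z))$ via Lemma \ref{lemmU}, and then solves (\ref{U0}) for $\Theta$ and unwinds the two M\"obius maps to obtain (\ref{InvXI}). Your explicit appeal to uniqueness and the normalization $\tau(0)=1=\Theta(0)$ is a slightly more careful justification of a step the paper treats tersely, but the argument is the same.
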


\begin{proof}
\textbf{\textit{(i)}} The homographic transform $h:z \mapsto \theta$ with
$\theta = (1-z)/(1-z(1-x))$ is an involution, with inverse $h^{-1}$ given by
\begin{equation}
z = h^{-1}(\theta) = \frac{1-\theta}{1-\theta(1-x)}.
\label{MobInv}
\end{equation}
Let then $\xi = \Xi(z)$ with function $\Xi$ defined as in (\ref{DefXi}); we first claim that the corresponding $\theta = h(z)$ equals 
$\theta = \Theta(x \, \xi)$ where $\Theta$ is the function defined by the implicit equation (\ref{DefTheta}). In fact, definition (\ref{DefXi}) for 
$\Xi$ and expression (\ref{MobInv}) for $z$ in terms of $\theta$ together entail
$$
\xi = \Xi(z) = \frac{z}{z-1} \, \theta^{\, \nu} = 
\displaystyle \frac{1-\theta}{1-\theta(1-x)} 
\left ( \frac{1-\theta}{1-\theta(1-x)} - 1 \right )^{-1} \, \theta^{\, \nu} = 
\frac{\theta - 1}{x \, \theta} \, \theta^{\, \nu} 
$$
and the two sides of the latter equalities give 
$1 - \theta + x \xi \theta^{1-\nu} = 0$, hence the identity 
$\theta = \Theta(x \, \xi)$, as claimed. 

\textbf{\textit{(ii)}} The corresponding inverse $z = \Omega(\xi)$ can now be expressed as follows; equality (\ref{U0}) applied to $w = x \, \xi$ can be first solved for $\Theta(x \, \xi)$, giving
$$
\Theta(x \, \xi) = \frac{1 + (1-\nu)\pmb{\Sigma}(x \xi)}
{1 - \nu \, \pmb{\Sigma}(x \xi)};
$$
it then follows from (\ref{MobInv}) and this expression of $\Theta(x \, \xi)$ that
$$
z = \Omega(\xi) = \frac{1 - \Theta(x \, \xi)}{1 - (1-x)\Theta(x \, \xi)} = 
\frac{\displaystyle 1 - \frac{1 + (1-\nu)\pmb{\Sigma}(x \xi)}
{\displaystyle 1 - \nu \, \pmb{\Sigma}(x \xi)}}
{\displaystyle 1 - (1-x)\frac{1 + (1-\nu)\pmb{\Sigma}(x \xi)}
{1 - \nu \, \pmb{\Sigma}(x \xi)}}
$$
which easily reduces to formula (\ref{InvXI}).
\end{proof}

\subsection{Relation for EGF's}
We now turn an identity between the exponential generating functions of related sequences $S$ and $T$.

\begin{corol}
\textbf{When $\alpha = 0$ and given sequences $S$ and $T$ related by the inversion formulae 
$S = B(0,\beta,\gamma;x,\nu) \cdot T \Leftrightarrow 
T = A(0,\beta,\gamma;x,\nu) \cdot S$, the EGF $\mathfrak{G}_S^*$ of the sequence $S$ can be expressed by}
\begin{align}
\mathfrak{G}_S^*(z) = \exp(z) \cdot 
\sum_{k \geqslant 1} (-1)^k T_k  \, \frac{z^k}{k!} 
\biggl[ & \, \frac{\gamma + k}{\beta + k} \, 
\Phi((\beta + k)\nu;\gamma + k;-x \, z) \; + 
\nonumber \\
& \, \frac{\beta-\gamma}{\beta+k} \Phi((\beta + k)\nu;1+\gamma + k;-x \, z)\biggr]
\label{GsExp}
\end{align}
\textbf{for all $z \in \mathbb{C}$, where $\Phi(\lambda;\mu;\cdot)$ denotes the Confluent Hypergeometric function with parameters $\lambda$, 
$\mu \notin -\mathbb{N}$.}
\label{corEGF}
\end{corol}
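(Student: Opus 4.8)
The plan is to start directly from the definition of the exponential generating function $\mathfrak{G}_S^*(z)=\sum_{n\geqslant 1}S_n\,z^n/n!$ and substitute the inversion relation $S_n=\sum_{k=1}^n B_{n,k}(x,\nu;0,\beta,\gamma)\,T_k$ furnished by Theorem \ref{PropIn}. Since $\alpha=0$, the binomial coefficient reduces to $\binom{n+\alpha}{k+\alpha}=\binom{n}{k}$, so that
$$
\mathfrak{G}_S^*(z)=\sum_{n\geqslant 1}\frac{z^n}{n!}\sum_{k=1}^n(-1)^k\binom{n}{k}\Big[\tfrac{\gamma+k}{\beta+k}F(k-n,(\beta+k)\nu;\gamma+k;x)+\tfrac{\beta-\gamma}{\beta+k}F(k-n,(\beta+k)\nu;1+\gamma+k;x)\Big]T_k.
$$
First I would interchange the order of summation so as to sum over $n\geqslant k$ for each fixed $k$, and use the elementary identity $\binom{n}{k}/n!=1/\big(k!\,(n-k)!\big)$ together with the index change $j=n-k$ (so that $k-n=-j$). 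This pulls out the factor $(-1)^k T_k\,z^k/k!$ and reduces each of the two bracketed terms to one master sum
$$
\Psi(\lambda;\mu;z):=\sum_{j\geqslant 0}\frac{z^j}{j!}\,F(-j,\lambda;\mu;x),
$$
evaluated at $(\lambda;\mu)=\big((\beta+k)\nu;\gamma+k\big)$ and $(\lambda;\mu)=\big((\beta+k)\nu;1+\gamma+k\big)$, respectively.

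The heart of the argument is therefore the evaluation of $\Psi(\lambda;\mu;z)$, which I expect to be the single genuinely computational step. I would expand $F(-j,\lambda;\mu;x)$ by definition (\ref{DefFpoly}), write $(-j)_m=(-1)^m j!/(j-m)!$ so that the factor $j!$ cancels the $1/j!$, and then interchange the $j$- and $m$-summations. Setting $i=j-m$ decouples the two indices: the sum over $i\geqslant 0$ of $z^{m+i}/i!$ produces $e^z z^m$, while the sum over $m\geqslant 0$ of $(\lambda)_m(-xz)^m/\big((\mu)_m\,m!\big)$ is exactly the Confluent Hypergeometric series $\Phi(\lambda;\mu;-xz)$. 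This yields the compact identity
$$
\Psi(\lambda;\mu;z)=e^z\,\Phi(\lambda;\mu;-xz).
$$

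Finally I would substitute this evaluation back, obtaining $\mathfrak{G}_S^*(z)=e^z\sum_{k\geqslant 1}(-1)^kT_k\,\frac{z^k}{k!}\big[\,\cdots\,\big]$ with the two $\Phi$-terms, which is precisely (\ref{GsExp}). The only point requiring care, rather than a true obstacle, is the justification that the rearrangements are legitimate and that the resulting identity holds for all $z\in\mathbb{C}$: the inner series $\Phi(\lambda;\mu;-xz)$ is entire in $z$ because $\mu\notin-\mathbb{N}$, and for each fixed $k$ the manipulation of $\Psi$ is an absolutely convergent rearrangement for every $z$. Hence the claimed formula is an identity between generating functions holding coefficient-by-coefficient in the $T_k$, and the proof is complete.
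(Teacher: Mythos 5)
Your proposal is correct and follows essentially the same route as the paper's own proof: your master sum $\Psi(\lambda;\mu;z)$ is exactly the paper's inner sum $R_k(z,x;\beta,\gamma)=\sum_{m\geqslant 0}\frac{z^m}{m!}F(-m,(\beta+k)\nu;\gamma+k;x)$, and both are evaluated as $e^z\,\Phi(\lambda;\mu;-xz)$ by the identical rearrangement (expand $F$, write $(-j)_m=(-1)^mj!/(j-m)!$, interchange the two summations, and recognize the exponential series and the confluent hypergeometric series). The only cosmetic difference is that the paper splits the generating function into two sums $I+J$ before performing this computation, while you carry both bracketed terms through a single evaluation of $\Psi$.
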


\begin{proof}
For $\alpha = 0$, a calculation similar to that of Corollary \ref{corOGF} gives
\begin{equation}
\mathfrak{G}_S^*(z) = \sum_{n \geqslant 0} \frac{z^n}{n!} 
\left ( \sum_{k=1}^n B_{n,k}(x,\nu;0,\beta,\gamma) T_k \right ) = I + J
\label{E0}
\end{equation}
where terms $I$ and $J$, after interchanging the summation order between 
$n$ and $k$ and using the index change $m = n-k$, can be written as 
$$
\left\{
\begin{array}{ll}
I = \displaystyle \sum_{k \geqslant 1} (-1)^k T_k \frac{z^k}{k!} \, 
\frac{\gamma + k}{\beta + k} R_k(z,x;\beta,\gamma), 
\\ \\
J = \displaystyle \sum_{k \geqslant 1} (-1)^k T_k \frac{z^k}{k!} \, 
\frac{\gamma + k}{\beta + k} R_k(z,x;\beta,1+\gamma)
\end{array} \right.
$$
respectively, with
$$
R_k(z,x;\beta,\gamma) = \sum_{m \geqslant 0} \frac{z^m}{m!} \, 
F(-m,(\beta+k)\nu;\gamma+k;x).
$$
Writing $F(-m,\eta;\zeta;x) = m! \sum_{0 \leqslant j \leqslant m} 
(\eta)_j(-x)^j/\{j!(m-j)!(\zeta)_j\}$ after definition (\ref{DefFpoly}) for any $\eta$ and $\zeta$, the latter sum $R_k(z,x;\beta,\gamma)$ reduces to
$$
R_k(z,x;\beta,\gamma) = \sum_{j \geqslant 0} 
\frac{((\beta+k)\nu)_j}{(\gamma+k)_j \, j!} \, (-x)^j 
\sum_{m \geqslant j} \frac{z^m}{(m-j)!} = e^z \, 
\sum_{j \geqslant 0} 
\frac{((\beta+k)\nu)_j}{(\gamma+k)_j \, j!} \, (-x z)^j;
$$
from the expansion of $\Phi((\beta+k)\nu;\gamma + k;-xz)$ in powers of 
$-xz$, we then obtain $R_k(z,x;\beta,\gamma) = 
e^z\Phi((\beta+k)\nu;\gamma + k;-x z)$. Applying this identity to each sum 
$I$ and $J$ above, equality (\ref{E0}) provides (\ref{GsExp}).
\end{proof}

\section{Conclusions}

As argued in the Introduction, the explicit inversion of the five parameters family of lower-triangular matrices 
$\mathbf{A}(x,\nu;\alpha,\beta,\gamma)$ has been motivated by the resolution of linear system (\ref{T0}) whose coefficients depend on a specific family of Gauss Hypergeometric polynomials. Other important applications of inversion formulas involving Gauss Hypergeometric polynomials (such as Jacobi, Chebyshev, Ultraspherical,etc.) can also be found in weighted quadrature rules \cite{Esl05} whose errors can be controlled by some generalized classical inequalities \cite{Mas09}.    

The general inversion criterion stated in Theorem 
\ref{theoMaininversionR} could be possibly applied to other examples of 
so-called \textit{independent} sequences $(a_m)$ and $(b_m)$ in order to obtain new instances of inversion formulas. Similarly, remarkable functional identities can be derived through Corollary \ref{corOGF} for OGF's. As to Corollary \ref{corEGF} for EGF's, 
a further application  of relation (\ref{GsExp}) to the specific matrix 
(\ref{T0TER}) seems promising as it can provide interesting integral representations for the associated EGF $\mathfrak{G}_E^*$ of the solution 
$E = (E_k)_{k \geqslant 1}$. This is an object of forthcoming study.




\section{Appendix}


\subsection{Proof of Lemma \ref{lemm1}}
\label{A1}
\textbf{a)} By the reflection formula 
$\Gamma(z)\Gamma(1-z) = \pi/\sin(\pi \, z)$, $z \notin -\mathbb{N}$ 
\cite[Sect.5.5.3]{NIST10} applied to the argument $z = r-\mu$, the generic term $d_r(\lambda,\mu)$ of the sum $D_N(\lambda,\mu)$ equivalently reads
$$
d_r(\lambda,\mu) = \frac{(-1)^r}{\Gamma(1+r-\lambda)\Gamma(1-r+\mu)} = 
- \frac{\sin(\pi \mu)}{\pi} \, \frac{\Gamma(r-\mu)}{\Gamma(1+r-\lambda)}
$$
and Stirling's formula \cite[Sect.5.11.3]{NIST10} entails that 
$d_r(\lambda,\mu) = O(r^{\lambda-\mu-1})$ for large $r$; the series 
$\sum_{r \geqslant 0} d_r(\lambda,\mu)$ is thus convergent if and only if 
$\mathrm{Re}(\mu) > \mathrm{Re}(\lambda)$. Write then the finite sum 
$D_N(\lambda,\mu)$ as the difference
\begin{align}
& \, \sum_{r=0}^{+\infty} \frac{(-1)^r}{\Gamma(1+r-\lambda)\Gamma(1-r+\mu)} - 
\sum_{r=N}^{+\infty}\frac{(-1)^r}{\Gamma(1+r-\lambda)\Gamma(1-r+\mu)} \; = 
\nonumber \\
& \, \sum_{r=0}^{+\infty}\frac{(-1)^r}{\Gamma(1+r-\lambda)\Gamma(1-r+\mu)} - 
\sum_{r=0}^{+\infty}\frac{(-1)^{r+N}}{\Gamma(1+r+N-\lambda)\Gamma(1-r-N+\mu)};
\nonumber
\end{align}
applying similarly the reflection formula to the argument $z = r-\mu+N$ for the second sum, we obtain
\begin{align}
D_N(\lambda,\mu) & \, = \frac{\sin(\pi \, \mu)}{\pi} 
\left [ \sum_{r=0}^{+\infty} \frac{\Gamma(r-\mu+N)}{\Gamma(1+r+N-\lambda)} 
- \sum_{r=0}^{+\infty} \frac{\Gamma(r-\mu)}{\Gamma(1+r-\lambda)} \right ]
\nonumber \\
& \, = \frac{\sin(\pi \, \mu)}{\pi} 
\left [ \sum_{r=0}^{+\infty} 
\frac{(N-\mu)_r\Gamma(r-\mu)}{(1+N-\lambda)_r\Gamma(1+N-\lambda)} 
- \sum_{r=0}^{+\infty} \frac{(-\mu)_r\Gamma(-\mu)}{(1-\lambda)_r\Gamma(1-\lambda)} \right ]
\nonumber
\end{align}
when introducing Pochhammer symbols of order $r$, hence
\begin{align}
D_N(\lambda,\mu) = \frac{\sin(\pi \, \mu)}{\pi} 
\Bigl [ & \, \frac{\Gamma(N-\mu)}{\Gamma(1+N-\lambda)} \, F(1,N-\mu;1+N-\lambda;1) \; - 
\nonumber \\
& \, \frac{\Gamma(-\mu)}{\Gamma(1-\lambda)} \, F(1,-\mu;1-\lambda;1) 
\Bigr ]
\nonumber
\end{align}
in terms of the Hypergeometric function $F$. Now, recall the identity 
\cite[Sect.9.122.1]{GRAD07}
\begin{equation}
F(\alpha,\beta;\gamma;1) = \frac{\Gamma(\gamma)\Gamma(\gamma-\alpha-\beta)}
{\Gamma(\gamma-\alpha)\Gamma(\gamma-\beta)}, \qquad 
\mathrm{Re}(\gamma) > \mathrm{Re}(\alpha + \beta);
\label{HyperF1}
\end{equation}
when applying (\ref{HyperF1}) to the values $\alpha = 1$, $\beta = N-\mu$, 
$\gamma = 1 + N -\lambda$ (resp. $\alpha = 1$, $\beta = -\mu$, 
$\gamma = 1 - \lambda$), the latter sum $D_N(\lambda,\mu)$ consequently reduces to 
\begin{equation}
D_N(\lambda,\mu) = \frac{\sin(\pi \, \mu)}{\pi} 
\frac{\Gamma(\mu-\lambda)}{\Gamma(1-\lambda+\mu)} 
\left [ \frac{\Gamma(N-\mu)}{\Gamma(N-\lambda)} - \frac{\Gamma(-\mu)}{\Gamma(-\lambda)} \right ], \quad \mathrm{Re}(\mu) > \mathrm{Re}(\lambda).
\label{A11}
\end{equation}
By the reflection formula for function $\Gamma$ again, we have
$$
\Gamma(N-\mu)\Gamma(1-N+\mu) = - \frac{(-1)^N \pi}{\sin(\pi \mu)}, \qquad 
\Gamma(-\mu)\Gamma(1+\mu) = - \frac{\pi}{\sin(\pi \mu)},
$$
so that expression (\ref{A11}) eventually yields 
\begin{align}
D_N(\lambda,\mu) & \, = - \frac{\Gamma(\mu-\lambda)}{\Gamma(1-\lambda+\mu)} 
\left [ \frac{(-1)^N}{\Gamma(N-\lambda)\Gamma(1-N+\mu)} - 
\frac{1}{\Gamma(-\lambda)\Gamma(1+\mu)} \right ] 
\nonumber \\
& \, = \frac{1}{\lambda-\mu} 
\left [ \frac{(-1)^N}{\Gamma(N-\lambda)\Gamma(1-N+\mu)} - 
\frac{1}{\Gamma(-\lambda)\Gamma(1+\mu)} \right ]
\nonumber
\end{align}
which states the first identity (\ref{Sn}) for 
$\mathrm{Re}(\mu) > \mathrm{Re}(\lambda)$.

\textbf{b)} The reflection formula for $\Gamma$ applied to $z = r-\lambda$ enables us to write 
\begin{align}
D_N(\lambda,\lambda) & \, = 
\sum_{r=0}^{N-1} \frac{(-1)^r}{\Gamma(1+r-\lambda)\Gamma(1-r+\lambda)} 
= - \frac{\sin(\pi \lambda)}{\pi} \sum_{r=0}^{N-1} 
\frac{\Gamma(r-\lambda)}{\Gamma(1-r+\lambda)} 
\nonumber \\
& \, = - \frac{\sin(\pi \lambda)}{\pi} \sum_{r=0}^{N-1} \frac{1}{r-\lambda} 
= \frac{\sin(\pi \lambda)}{\pi} 
\left [ \psi(-\lambda) - \psi(N-\lambda) \right ]
\nonumber
\end{align}
after the expansion formula \cite[Chap.5, Sect.5.7.6]{NIST10} for the function $\psi$ and the second identity (\ref{Sn}) for $\mu = \lambda$ follows.

\textbf{c)} The first identity (\ref{Sn}) stated for 
$\mathrm{Re}(\mu) > \mathrm{Re}(\lambda)$ defines an analytic function of variables $\lambda \in \mathbb{C}$ and $\mu \in \mathbb{C}$ for 
$\mu \neq \lambda$; besides, it is easily verified that this function has the limit given by $D_N(\lambda,\lambda)$ when $\mu \rightarrow \lambda$. On the other hand, the finite sum $D_N(\lambda,\mu)$ defines itself an entire function of both variables $\lambda \in \mathbb{C}$ and 
$\mu \in \mathbb{C}$; by analytic continuation, identity (\ref{Sn})  consequently holds for any pair 
$(\lambda,\mu) \in \mathbb{C} \times \mathbb{C}$ $\blacksquare$

\subsection{Proof of Lemma \ref{lemmU}}
\label{A3}
\textbf{a)} We first determine the convergence radius of the power series 
$\pmb{\Sigma}(w)$ in terms of complex parameter $\nu$. For large $b$, 

$\bullet$ if $1 - \nu \notin \; ]-\infty,0]$ and $-\nu \notin \; ]-\infty,0]$, that is, if $\nu \in \mathbb{C} \setminus [0,+\infty[$, the generic term 
$\sigma_b$ of this series is asymptotic to
$$
\sigma_b = \frac{\Gamma(b(1-\nu))}{\Gamma(b)\Gamma(1-b\nu)} 
= - \frac{1}{\nu} \cdot \frac{\Gamma(b(1-\nu))}{b! \, \Gamma(-b\nu)} 
\sim - \sqrt{\frac{-\nu}{2\pi(1-\nu)b}} \, e^{b \cdot \varphi^-(\nu)}
$$
after Stirling's formula $\Gamma(z) \sim \sqrt{2\pi} 
e^{z \log z -z}/\sqrt{z}$ for large $z$ with 
$\vert \mathrm{arg}(z) \vert \leqslant \pi - \eta$, 
$\eta > 0$ \cite[Chap.5, Sect.5.11.3]{NIST10}, and where 
$\varphi^-(\nu) = (1-\nu) \log (1-\nu) + \nu \log(-\nu)$; 

$\bullet$ if $1 - \nu \notin \; ]-\infty,0]$ and $\nu \in [0,+\infty[$ 
(the parameter $\nu$ is consequently real), that is, 
$0 \leqslant \nu < 1$, write 
$\Gamma(1-b\nu) = \pi / [\sin(\pi b \nu) \Gamma(b\nu)]$ after the reflection formula so that the generic term $\sigma_b$ is now asymptotic to
$$
\sigma_b = - \frac{1}{\nu} \cdot \frac{\Gamma(b(1-\nu))}{b! \, \pi} 
\Gamma(b\nu) \, \sin(\pi b \nu) \sim 
- \frac{1}{\nu} \sqrt{\frac{\pi}{2\nu(1-\nu)b^3}} \, \sin(\pi b \nu) 
\, e^{b \cdot \varphi(\nu)}
$$
after Stirling's formula (ibid.) and where
$\varphi(\nu) = (1-\nu) \log (1-\nu) + \nu \log(\nu)$; 

$\bullet$ finally if $\nu - 1 \in [0,+\infty]$, that is, if 
$\nu \geqslant 1$, write $\Gamma(1-b\nu) = 
\pi / [\sin(\pi b \nu) \Gamma(b\nu)]$ together with  
$\Gamma(1-b(1-\nu)) = \pi / [\sin(\pi b (1-\nu)) \Gamma(b(1-\nu))]$ after the reflection formula so that the generic term $\sigma_b$ is asymptotic to
$$
\sigma_b = \frac{(-1)^{b-1}}{\nu} \cdot 
\frac{\Gamma(b\nu)}{b! \, \Gamma(1-b(1-\nu))} \sim 
\frac{(-1)^{b-1}}{\nu} \sqrt{\frac{1}{2\pi \, \nu(\nu - 1)b^3}} \, 
e^{b \cdot \varphi^+(\nu)}
$$
after Stirling's formula and where
$\varphi^+(\nu) = (1-\nu) \log (\nu-1) + \nu \log(\nu)$. 

\textbf{b)} By the latter discussion, it therefore follows that the power series $\pmb{\Sigma}(w)$ has the finite convergence radius 
$R(\nu) = \vert e^{-\psi(\nu)} \vert$ with $\psi(\nu) = \varphi^-(\nu)$, 
$\psi(\nu) = \varphi(\nu)$ or $\varphi(\nu) = \varphi^+(\nu)$ according to the value of $\nu$, as stated in Lemma \ref{lemmU}.

Now, by the above expression of $\sigma_b$ for 
$\nu \in \mathbb{C} \setminus [0,+\infty[$, write
\begin{equation}
\sigma_b = - \frac{1}{\nu} \cdot 
\frac{\Gamma(b(1-\nu))}{b! \, \Gamma(-b\nu)}  = - \frac{1}{\nu} \cdot \binom{-1 + b(1-\nu)}{b} = - \frac{1}{\nu} \cdot 
\binom{\alpha + b\beta}{b}
\label{DefSig0}
\end{equation}
for all $b \geqslant 1$, where we set $\alpha = -1$ and $\beta = 1-\nu$. From \cite[Problem 216, p.146, p. 349]{PolSze72}, it is known that 
\begin{equation}
1 + \sum_{b \geqslant 1} \binom{\alpha + b\beta}{b} w^b = 
\frac{\Theta(w)^{\alpha+1}}{(1-\beta)\Theta(w) + \beta}
\label{PolSzeg0}
\end{equation}
for any pair $\alpha$ and $\beta$, where $\Theta = \Theta(w)$ denotes the unique solution to the implicit equation 
$1 - \Theta + w\, \Theta^\beta = 0$ 
with $\Theta(0) = 1$. By expression (\ref{DefSig0}) and relation 
(\ref{PolSzeg0}) applied to the specific values $\alpha = -1$ and 
$\beta = 1-\nu$, we can consequently assert that the series 
$\pmb{\Sigma}(w)$ equals
$$
\pmb{\Sigma}(w) = \sum_{b \geqslant 1} \sigma_b \, w^b = - \frac{1}{\nu} 
\left [ \frac{1}{\nu \, \Theta(w) + 1-\nu} - 1 \right ] = 
\frac{\Theta(w)-1}{\nu \, \Theta(w) + 1-\nu}
$$
for $\vert w \vert < R(\nu)$, as claimed. The validity of equality 
(\ref{U0}) for real $\nu \in [0,+\infty[$ follows by analytic continuation $\blacksquare$



\begin{thebibliography}{13}

\bibitem{GQSN18} Guillemin F, Quintuna Rodriguez VK, Simonian A, Nasri R. \textit{Sojourn time in a $M^{[X]}/M/1$ Processor Sharing Queue with Batch Arrivals (II)}. arXiv preprint arXiv:2006.02198; 2020.

\bibitem{NIST10} Olver FW, Lozier DW, Boisvert RF, et al. (ed.). \textit{NIST Handbook of Mathematical Functions}. Cambridge university press. 2010.

\bibitem{Kratten96} Krattenthaler C. \textit{A new Matrix Inverse}. Proceedings of the American Mathematical Society. 1996; 124: 47-59. 

\bibitem{Schlo97} Schlosser M. \textit{Multidimensional Matrix Inversions and $A_r$ and $D_r$ Basic Hypergemeotric series}. The Ramanujan Journal. 1997; 243-274. 

\bibitem{Costa12} Costabile FA, Longo E. \textit{Algebraic Theory of Appell Polynomials with Application to General Interpolation Problem}, Chapter 2 in Linear Algebra book. IntechOpen. 2012. http://dx.doi.org/10.5772/46482.

\bibitem{CagKoo15} Cagliero L, Koornwinder T H. \textit{Explicit Matrix Inverses for Lower Triangular Matrices with Entries involving Jacobi Polynomials}. Journal of Approximation Theory. 2015; 193: 20-38.

\bibitem{Alden15} Aldenhoven N. \textit{Explicit Matrix Inverses for Lower Triangular Matrices with Entries involving Continuous $q$-ultraspherical Polynomials}. Journal of Approximation Theory. 2015; 199: 1-12. 

\bibitem{Cha10} Chaggara H, Koepf W. \textit{On Linearization Coefficients of Jacobi Polynomials}. Applied Mathematics Letters. 2010; 23: 609-614.

\bibitem{Fou13} Foupouagnigni M, Koepf W, Tcheutia DD. \textit{Connection and Linearization Coefficients of the Askey-Wilson Polynomials}. Journal of Symbolic Computation. 2013; 53: 96-118. 

\bibitem{ERD81} Erdelyi A. \textit{Higher Transcendental Functions}. Vol. 1. New York: MacGraw Hill. 1981.

\bibitem{GRAD07} Gradsteyn IS, Ryzhik IM. \textit{Table of Integrals, Series and Products}. ed. Academic Press. 2007.

\bibitem{Esl05} Eslahchi MR, Dehghan M and Masjed-Jamei M. \textit{On Numerical Improvement of the First Kind Gauss–Chebyshev Quadrature Rules}. Applied Mathematics and Computation. 2005; 165: 5-21. 

\bibitem{Mas09} Masjed-Jamei M, Dragomir SS, Srivastava HM. \textit{Some Generalizations of the Cauchy-Schwarz and the Cauchy-Bunyakovsky Inequalities involving Four Free Parameters and their Applications}. Mathematical and Computer Modelling. 2009; 49: 1960-1968.

\bibitem{PolSze72} Polya G, Szego G. \textit{Problems and Theorems in Analysis. Vol.I}. Springer Science \& Business Media. 1972.

\end{thebibliography}
\end{document}